\PassOptionsToPackage{numbers}{natbib}
\documentclass[graybox,natbib,envcountsame]{svmult}

%
%

\usepackage{mathptmx}
\usepackage{helvet}
\usepackage{courier}
\usepackage{makeidx}
\usepackage{graphicx}
\usepackage{multicol}
\usepackage[bottom]{footmisc}

%
%

\usepackage{amsmath}   
\usepackage{amssymb} 
\usepackage[utf8]{inputenc}
\usepackage{tikz}
\usetikzlibrary{%
  cd, 
  matrix 
}

%
%

\usepackage{mathrsfs}
\usepackage[all]{xy}

%
%


\newcommand{\R}{\mathbb{R}}





\spnewtheorem*{AAT}{Abel's Addition Theorem}{\bfseries}{\itshape}

%
%
\smartqed
\usepackage{etoolbox}
\patchcmd{\qed}{\ifmmode\qedsymbol}{\ifmmode\the\qedsymbol}{}{\foobar}
\patchcmd{\qed}{\hfil\qedsymbol}{\hfil\the\qedsymbol}{}{\foobar}
\qedsymbol{\proofSymbol}

%
%
\newcommand{\qedhere}{\tag*{\the\qedsymbol}}

\begin{document}

\title*{On a  quasi-resonant nonlinear Schr\"odinger equation on  irrational tori}

\author{Alexander Hrabski, Yulin Pan, Gigliola Staffilani \and Bobby Wilson}
\institute{A.~Hrabski \at University of Michigan, Department of Naval Architecture and Marine Engineering, Ann Arbor, MI 48109, USA,
  \email{ahrabski@umich.edu} \and %
  Y.~Pan \at University of Michigan, Department of Naval Architecture and Marine Engineering, Ann Arbor, MI 48109, USA, \email{yulinpan@umich.edu} \and %
  G.~Staffilani \at Massachusetts Institute of Technology, Department of Mathematics, Cambridge, MA 02139, USA, \email{gigliola@math.mit.edu} \and %
  B.~Wilson \at University of Washington, Department of Mathematics, Box 354350,  Seattle, WA 98195-4350, USA, \email{blwilson@uw.edu}}




\maketitle

\abstract{
    In this work we consider a 2D    Schr\"odinger  initial value problem  in which the cubic nonlinearity has been restricted to quasi-resonant interactions,  moreover we consider   periodic solutions whose ratio between the periods is an algebraic irrational positive number. We  analyze the  energy transfer  for these solutions  by estimating how the support in frequency space of an initial datum propagates in time. Moreover we complement the analytic  study with numerical experimentation.  As a byproduct of our investigation we  prove that the quasi-resonant  cubic Schr\"odinger  initial value problem we consider, in both the focusing and defocusing case,  are globally well-posed for initial data of finite mass. Finally using a newly discovered conservation law we also show that if the initial data of the  quasi-resonant  initial value problem is compact, then any  norm $H^s, \, s\geq 1,$ of the associate solution is uniformly bounded.
}


\section{Introduction}
In this work we continue the study of energy transfer for solutions to  defocusing  and periodic nonlinear Schr\"odinger (NLS) equations. Here instead of focusing our attention to obtain the best estimates for  the  magnitude of these solutions as time evolves, we analyze the dynamics of their   support in the frequency space.  This study can be viewed under the much wider umbrella of weak turbulence theory and was initiated in the form that inspired us for this work  by Bourgain in \cite{bourgain1996growth}. In weak turbulence theory, given a solution $\psi(t,x)$ to a dispersive equation, one is interested in understanding the long time behavior of the Fourier coefficients $|\hat \psi(t,k)|^2$, and in particular the migration of their Fourier support to higher frequencies $k$. In order to detect this migration, one may study the asymptotic behavior in time of higher Sobolev norms of the solution $\psi(t,x)$. Several works have appeared in recent years concerning both the proof of   polynomial in time upper bounds  for higher Sobolev norms of solutions \cite{Staffilani1997, Soinger2011, Soinger2011-1, Soinger2012,
CKS2012, deng2019growth, deng2019growth-only, PTV17, Wang2008-1, Wang2008}, and the constructions of solutions that exhibit a growth \cite{colliander2010transfer, carles2010energy, Hani2015, Hani2019, Haus2020, Haus2016,  Haus2015,  GG}. For some periodic linear  equations   with potential some of these time bounds have been proved to be almost sharp \cite{Bourgain2003, Bourgain1999, Bourgain1999-1}, namely that they could be at most logarithmic. But in general    one is  far from obtaining sharp results except when the domain is a mixed manifold as in \cite{Hani2015}, or when one considers irrational tori as we will show below. In this work we are interested in analyzing both analytically and numerically how the question of energy transfer may depend on the relative rationality of the periods of the solution at hand. In particular here we consider a
2D set up involving a  cubic NLS in which the nonlinear frequency interactions are restricted to be quasi-resonant\footnote{A precise definition is given in Definition \ref{quasi-res}.}.

Let us then consider  $\underline{\omega}=(\omega_1, \omega_2) \in \mathbb{R}_+^2$ and $\mathbb{T}_{\underline{\omega}}^2= \mathbb{R}/\omega_1\mathbb{Z} \times \mathbb{R}/\omega_2\mathbb{Z}$  the  two-dimensional flat torus scaled by $\underline{\omega}$. If   $\underline{\omega}$ satisfies
	\begin{align}\label{irr}
		\frac{\omega_1^2}{\omega_2^2} \not\in \mathbb{Q}
	\end{align}
	 we  stipulate that the torus is  {\it irrational}, otherwise that is {\it rational}.

Let us then  consider the NLS
	\begin{align} \label{irrNLS}
		\begin{array}{ll}
			i \partial_t \psi = \Delta \psi - |\psi|^2\psi, & x \in \mathbb{T}_{\underline{\omega}}^2, \, t \in \mathbb{R}\\
			\psi(0) = \psi_0 \in H^s(\mathbb{T}_{\underline{\omega}}^2),
		\end{array}
	\end{align}
	where $H^s(\mathbb{T}_{\underline{\omega}}^2)$ represents the $L^2$ based Sobolev space of index $s$ with norm $\|\cdot\|_s$.
	In \cite{staffilani2020stability} the third and forth authors proved that if the torus is irrational then the construction in \cite{colliander2010transfer} and \cite{carles2010energy} of solutions for which high Sobolev norms are  mildly growing cannot be performed since the set of resonant  frequencies  is not large enough, and in particular it does not contain certain diamond configurations that are necessary as stepping stones in the dynamics of those constructions.  Still in \cite{GG} the authors proved that if the irrationality of the torus is not too severe, namely the ratio in \eqref{irr} is {\it close} to a rational number, still the result in \cite{colliander2010transfer} is valid. It is clear then that the dynamics of the periodic NLS cannot simply be reduced to the dichotomy of the domain being a rational or irrational torus. Going back to the  paper \cite{staffilani2020stability},  it was also remarked there that in a 2D irrational torus the four-waves resonance relation among the frequencies splits into two uncoupled 1D resonance relations, one per each component of the frequency vector.  We recall that in 1D  problem \eqref{irrNLS} is integrable, and as a consequence  solutions are  uniformly bounded in any Sobolev norm \cite{KST17}. In \cite{staffilani2020stability} it was  in fact conjectured that the 1D  uncoupled resonant interactions mentioned above and the 1D integrability may be a possible explanation for the expected weaker growth of the Sobolev norms in the irrational setting. We will go back to this conjecture later in this paper in Remark \ref{integrability}.     This weaker growth was also reported  in the  polynomial upper bounds  obtained in  \cite{deng2019growth-only, deng2019growth}, where the authors proved that for 3D irrational tori the polynomial bounds in time have a smaller degree compared to the rational case.
	
	Intuitively this should not come as a surprise since the irrationality of the torus should mitigate the boundary effects when the wave solution interacts with  the boundary itself.  In a sense the relative  irrationality of the periods should introduce some very faint dispersion. It is to be noted in fact that in $\mathbb{R}^2$, where dispersion can act without any obstacle,  recent results in \cite{Dodson2016} guarantee that the solution $\psi$ of \eqref{irrNLS} scatters in any $H^s$ norm for $s\geq 0$, and hence any Sobolev norm of solution $\psi$  is uniformly bounded in time.
	
	From the numerical experimentation point of view fewer results are at the moment available. We recall the work in \cite{Colliander-num}, where the authors studied in greater detail the construction in \cite{colliander2010transfer}, and the work in \cite{hrabski2020effect}, where the first two  authors of this paper  evaluated the stationary spectrum and energy cascade on rational and irrational tori for the 2D Majda-McLaughlin-Tabak equation (with high-frequency dissipation). One of their important findings is that in fact the energy cascade on irrational tori is much less efficient than on rational ones.

	 \medskip
	 We are now ready to state the main analytic results of this paper. 	 
We start with a  global well-posedness theorem   that is of independent interest.

\begin{theorem}\label{mainthm2} Assume that the torus $\mathbb{T}^2_{\underline{\omega}}$ is  a generic irrational ordered pair.
Consider the quasi-resonant focusing and defocusing NLS  initial value problem
\begin{align} \label{irrNLS-res}
		\begin{array}{ll}
			i \partial_t \psi = \Delta \psi \pm (|\psi|^2\psi)^*, & x \in \mathbb{T}_{\underline{\omega}}^2, \, t \in \mathbb{R}\\
			\psi(0)=\psi_0,
		\end{array}
	\end{align}
in which $(|\psi|^2\psi)^*$ indicates that the nonlinear interactions are  only taking place on a  quasi-resonant\footnote{The definition of a quasi-resonant set is in  Definition \ref{quasi-res}.} set. Then
\eqref{irrNLS-res} is globally well posed for $\psi_0\in L^2(\mathbb{T}_{\underline{\omega}}^2)$.
\end{theorem}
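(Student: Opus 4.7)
The plan is to treat \eqref{irrNLS-res} as an ODE in $L^2(\mathbb{T}_{\underline{\omega}}^2)$: once $\mathcal{N}(\psi):=(|\psi|^2\psi)^*$ is shown to be locally Lipschitz from $L^2$ to $L^2$, Picard--Lindel\"of in this Banach space (applied to the Duhamel formulation, so that the unitarity of $e^{-it\Delta}$ on $L^2$ is all that is needed from the linear flow) produces a local solution. The $L^2$ mass conservation law then promotes it to a global one, and since this conservation is insensitive to the sign in front of $\mathcal{N}$, the same argument covers the focusing and defocusing cases in one shot.

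The heart of the matter is therefore the cubic $L^2$ estimate $\|\mathcal{N}(\psi)\|_{L^2}\lesssim_{\underline{\omega}} \|\psi\|_{L^2}^3$, from which the Lipschitz bound $\|\mathcal{N}(\psi_1)-\mathcal{N}(\psi_2)\|_{L^2}\lesssim (\|\psi_1\|_{L^2}+\|\psi_2\|_{L^2})^2\|\psi_1-\psi_2\|_{L^2}$ follows by trilinearity. To prove it, I would use the observation recalled in the introduction that on an irrational torus the four-wave resonance relation decouples into two independent 1D resonance conditions, each of which forces the pairing $\{k_1^{(j)},k_3^{(j)}\}=\{k_2^{(j)},k^{(j)}\}$ in coordinate $j\in\{1,2\}$ for any exactly resonant configuration. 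The exact-resonant contribution to $\mathcal{N}$ thus decomposes, up to inclusion-exclusion corrections, into four pieces corresponding to the four pairing choices. Two of them collapse to $\|\psi\|_{L^2}^2\psi$ and $\mathcal{F}^{-1}(|\hat{\psi}|^2\hat{\psi})$, bounded in $L^2$ directly and via the embedding $\ell^2\hookrightarrow\ell^6$ respectively. Writing $F$ for the infinite matrix $F_{ab}=\hat{\psi}(a,b)$, the remaining two pieces are the matrix products $FF^*F$ and $F^\top\overline{F}F^\top$, whose Hilbert--Schmidt norms are bounded by $\|F\|_{\mathrm{HS}}^3\simeq \|\psi\|_{L^2}^3$ by the singular-value inequality $\sum\sigma_i^6\leq(\sum\sigma_i^2)^3$. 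A quasi-resonant thickening of the exact-resonant set adds a further Cauchy--Schwarz factor on the modulation fibre, bounded uniformly by the quasi-resonance parameter, and the $\|\psi\|_{L^2}^3$ scaling is preserved.

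Mass conservation itself is a direct computation: pairing \eqref{irrNLS-res} with $\bar{\psi}$, integrating over $\mathbb{T}_{\underline{\omega}}^2$ and taking imaginary parts reduces the question to the reality of $\langle\mathcal{N}(\psi),\psi\rangle_{L^2}$, which follows from invariance of the quasi-resonant set under the involution $(k_1,k_2,k_3,k)\mapsto(k_2,k_1,k,k_3)$ built into the definition of Section~3. The main obstacle in executing the plan is the last step of the cubic $L^2$ estimate: one must check that the precise quasi-resonant set used in the paper is a thickening of the exact-resonant set narrow enough for the matrix-product identities above to survive with uniform constants. This is where the genericity of $\underline{\omega}$ enters essentially, through the Diophantine lower bound that decouples a small value of $\omega_1^2 A+\omega_2^2 B$ into the smallness of $A$ and $B$ separately; on a rational torus this decoupling fails, the resonance geometry is no longer lower-dimensional, and the argument would not close, consistent with the hypotheses of the theorem.
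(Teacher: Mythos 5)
Your overall architecture is the same as the paper's: a Picard/contraction argument for the Duhamel (equivalently, gauged frequency-space) formulation, driven by a trilinear $\ell^2$ estimate on the quasi-resonant nonlinearity, with $L^2$ conservation (proved by the symmetry of the quartet set under swapping barred and unbarred indices) promoting the local solution to a global one for both signs. Your treatment of the \emph{exactly} resonant part is also essentially the paper's, just repackaged: Lemma \ref{axisparallel} forces the axis-parallel pairings, and your four pairing cases --- two collapsing to $\|\psi\|_{L^2}^2\hat\psi_k$ and the diagonal $|\hat\psi_k|^2\hat\psi_k$ term, two giving $FF^*F$-type matrix products bounded in Hilbert--Schmidt norm by $\|F\|_{\mathrm{HS}}^3$ --- are exactly the cases the paper handles by duality and iterated Cauchy--Schwarz in the proof of Theorem \ref{lemmaT} (the inequality $\sum_i\sigma_i^6\le(\sum_i\sigma_i^2)^3$ is the operator-theoretic form of that computation). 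This is a clean and correct way to present the resonant estimate.

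The genuine gap is the step you yourself flag as the main obstacle: the quasi-resonant quartets that are \emph{not} exactly resonant. Your proposed fix --- ``a further Cauchy--Schwarz factor on the modulation fibre, bounded uniformly by the quasi-resonance parameter'' --- does not close as stated: the modulation takes values $p+\omega^2 q$ with $p,q\in\mathbb{Z}$, and for a fixed nonzero value of $(p,q)$ there are still infinitely many quartets $(k_1,k_2,k_3,k)$ realizing it, so the ``fibres'' are not uniformly controlled and the matrix-product identities genuinely fail off the exact-resonant set. What actually saves the argument is Corollary \ref{quasiroth}: because $\omega^2$ satisfies the Roth-type genericity of Theorem \ref{gencondition}, the condition $0<|p+\omega^2 q|\le \Lambda(\sum_i|k_i|^2)^{-(1+\tau)}$ forces $\max_i|k_i|\le C_\omega\Lambda^{1/(2(\tau-1))}$, so the non-resonant quasi-resonant quartets form a \emph{finite} set. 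The corresponding piece of the nonlinearity is then a polynomial in finitely many Fourier modes and trivially obeys the cubic $\ell^2$ bound (this is exactly the splitting \eqref{truncationbound} in the paper). So the Diophantine hypothesis enters not through a fibre-wise estimate but through an outright truncation in frequency; with that replacement your proof is complete. (Note also that the paper proves the stronger $h^s$ version with the estimate $\|\mathcal{N}(z)\|_{h^s}\lesssim\|z\|_{h^s}\|z\|_{\ell^2}^2$, but for Theorem \ref{mainthm2} as stated your $s=0$ case suffices.)
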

Here  a generic irrational ordered pair refers to those vectors $\underline{\omega}\in \mathbb{R}^2_+$ whose quotients $\omega^2_1/\omega^2_2$ are not well-approximated by rationals which form a full-measure subset of the real numbers.
\begin{remark}
We note here that in contrast the full NLS initial value problem \eqref{irrNLS} is not known to be well-posed in  $L^2$, not even locally in time. This is  considered a hard open problem. In fact as for now,  local well-posedness for the full NLS equation is available through Strichartz estimates for which $s>0$ is necessary \cite{bourgain1993fourier}.  Below we restate this result  in frequency space in Theorem \ref{lemmaT}, and we  will see in its proof that the reason why one can prove well-posedness for \eqref{irrNLS-res} for $s=0$ is because the irrationality of the torus decouples the components of the frequencies, so that the set up  becomes in a sense  one dimensional, and  in 1D the periodic cubic NLS is globally wellposed in $L^2$ \cite{bourgain1993fourier}.
\end{remark}

The second result of this paper concerns uniform bounds for the norms $H^s, s>1,$  for the solutions to the quasi-resonant problem \eqref{irrNLS-res} that start with a compact support in frequency space. We have the following:

\begin{theorem}\label{mainthm3} Assume that the torus $\mathbb{T}^2_{\underline{\omega}}$ is  a generic irrational ordered pair.
Consider the quasi-resonant NLS  initial value problem \eqref{irrNLS-res}.
 Then if the initial data $\phi_0$ has  compact support in the frequency space,  for any $s>1$, the $H^s$ norm of the associate global solution is uniformly bounded in time.
\end{theorem}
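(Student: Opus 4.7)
The approach is to combine the newly established conservation law of Proposition \ref{nocasc} with the $L^2$ global well-posedness of Theorem \ref{mainthm2}. On a generic irrational torus the four-wave resonance relation for \eqref{irrNLS} decouples into two independent one-dimensional resonance relations, one per coordinate of the frequency vector. Restricting the nonlinearity to the quasi-resonant set therefore kills every interaction that would transport Fourier mass across a coordinate-aligned square $Q_N=[-N,N]^2$ in frequency space, and this is exactly what Proposition \ref{nocasc} is designed to express: the $L^2$ mass of $\hat{\phi}(t)$ outside $Q_N$ is controlled by (indeed, equals) the initial $L^2$ mass of $\hat{\phi}_0$ outside $Q_N$, for every time $t\in\mathbb{R}$.

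Given this, the plan is as follows. First I would pick $N>0$ large enough that $\mathrm{supp}\,\hat{\phi}_0\subset Q_N$; by the compact-support hypothesis such an $N$ exists. Theorem \ref{mainthm2} then produces a unique global solution $\phi\in C(\mathbb{R};L^2(\mathbb{T}_{\underline{\omega}}^2))$, and Proposition \ref{nocasc} applied to $Q_N$ gives
\[
\sum_{k\notin Q_N}|\hat{\phi}(t,k)|^2=\sum_{k\notin Q_N}|\hat{\phi}_0(k)|^2=0,\qquad t\in\mathbb{R},
\]
so that $\mathrm{supp}\,\hat{\phi}(t)\subset Q_N$ for every $t$. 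Combined with the $L^2$ mass conservation $\|\phi(t)\|_{L^2}=\|\phi_0\|_{L^2}$ that accompanies Theorem \ref{mainthm2}, this frequency-localization yields for any $s\geq 1$
\[
\|\phi(t)\|_{H^s}^2=\sum_{k\in Q_N}\langle k\rangle^{2s}|\hat{\phi}(t,k)|^2\leq\bigl(1+2N^2\bigr)^s\|\phi_0\|_{L^2}^2,
\]
which is uniform in $t$. The constant depends on $s$, on the size of the initial Fourier support through $N$, and on $\|\phi_0\|_{L^2}$, but not on $t$.

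The main obstacle is entirely upstream of this theorem: one must prove the no-cascade statement of Proposition \ref{nocasc}, which is the genuinely new ingredient relative to \cite{staffilani2020stability}. That step is where the irrationality of the torus and the quasi-resonant truncation of the nonlinearity enter in an essential way, and where the decoupling of the four-wave resonance relation into two one-dimensional relations must be exploited to rule out mass transfer across the boundary of $Q_N$. Once Proposition \ref{nocasc} is available, the present theorem reduces to the elementary Plancherel bookkeeping above, and one notes that the restriction $s\geq 1$ in the statement is inessential: the same argument bounds every $H^s$ norm uniformly in time for $s\geq 0$.
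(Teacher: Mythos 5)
Your proposal follows essentially the same route as the paper: once Proposition \ref{nocasc} is in hand, deduce that a frequency-localized datum stays frequency-localized for all time, then convert $L^2$ conservation into a uniform $H^s$ bound by Plancherel. One point needs care, though. Proposition \ref{nocasc} is not valid for an arbitrary square containing the initial support: the conservation of $N_M$ holds only for $M>M_0\sim C_{\omega}\Lambda^{1/(2(\tau-1))}$, because below that threshold there exist quasi-resonant quartets that are not exactly resonant, and these do transfer mass across the boundary of the square. So if the radius $N$ of the initial support is smaller than this threshold, your claim that the support of $\hat{\phi}(t)$ stays inside $Q_N$ is not justified; the correct statement, and the one the paper uses, is that the support stays inside $Q_M$ with $M=\max\{N,\,C_{\omega}\Lambda^{1/(2(\tau-1))}\}$, so the final constant is $M^{2s}L^2$ rather than $(1+2N^2)^sL^2$. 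This does not affect the uniform-in-time boundedness, and your closing remarks (that the whole burden sits in Proposition \ref{nocasc}, and that the restriction $s\geq 1$ is inessential) are both correct. A very minor further point: $N_M$ is an $H^s$-weighted tail rather than an $L^2$ tail, but since it vanishes at $t=0$ the distinction is immaterial here.
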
 
	
The next step after proving Theorem \ref{mainthm3}	would be to deduce  from it some information about the original NLS \eqref{irrNLS}. Unfortunately at this point we are not able to do so. The main issue is that when one considers large initial data, perturbation type theorems that are also global in time are not in general available.  It may be more suitable to conduct a numerical investigation of this question first, and in fact this will be the content of a future work of the authors.

In addition to the theoretical results above, we also conduct numerical simulations to accompany  the analysis  depicted in this theorem, alongside comparisons with results on a rational torus. The simulations utilize a GPU-accelerated code to integrate the defocusing initial value problem \eqref{irrNLS}  in time on rational or irrational tori. For given initial data size $\|\psi_0\|_s= R$, simulation (or evolution) time $T$ and $\varepsilon$, we numerically find the critical value $M$ such that $\|F^{-1}(\chi_{B_M^c}\hat{\psi}(t))\|_s = \varepsilon $, where $\psi(t)$ is the solution of the full defocusing NLS problem \eqref{irrNLS}, and $\chi_{B_M^c}$ is a smooth cut off function of the complement of the ball $B_M$ centered at the origin and radius $M$. We call this value $M$ the {\it barrier} for the evolution. The value of $M$, as well as the growth of the Sobolev norm and evolution of energy spectrum, are compared between rational and irrational tori. All results consistently show that energy cascades to high frequency is slower on irrational tori than on rational tori, which is further explained in terms of a kinematic analysis on the quasi-resonant sets on the two types of domains.

We are concluding this section by summarizing the structure of the paper. In Section \ref{not} we introduce the notation of  Sobolev spaces for Fourier coefficients. In Section \ref{sec.birkhoff}, we detail the resonances and quasi-resonances of the cubic NLS. Section \ref{sec.normal} quasi-resonant system, including the proof of Theorems   \ref{mainthm2} and \ref{mainthm3}. Finally, Section \ref{numerical} contains a thorough discussion and presentation of the numerical 
simulations.

\section*{Acknowledgement}
A. Hrabski was supported by the NSF grant DGE 1841052,  G. Staffilani was  supported by the NSF grants DMS 1764403, DMS-2052651 and the Simons Foundation, B. Wilson was  supported by NSF  grant  DMS 1856124. G. Staffilani also thanks the organizers of the Abel Symposium that took place in Summer 2023 for the wonderful meeting and the superb hospitality.

\section{Notations}\label{not}

	\subsection{$\ell^2$ Sobolev Spaces}
	
	\begin{definition}
For $x=\{x_n\}_{n \in \mathbb{Z}^2}$ and $s \in [0, \infty)$, define the standard Sobolev norm as
\begin{align*}
\|x\|_s =\|x\|_{h^s}:= \sqrt{ \sum_{n\in \mathbb{Z}^2}|x_n|^2 \langle n \rangle^{2s}   }
\end{align*}
where $\langle n \rangle:= \sqrt{|n|^2+1}$.  Define $h^s(\mathbb{Z}^2)$ as
\begin{align*}
h^s:=h^s(\mathbb{Z}^2):= \left\{ x= \{x_n\}_{n\in \mathbb{Z}^2} \,: \, \|x\|_s<\infty  \right\}.
\end{align*}
\end{definition}
Note that $h^0(\mathbb{Z}^2)=\ell^2(\mathbb{Z}^2)$.

\section{The Quasi-resonant Sets}\label{sec.birkhoff}

 From now on, without loss of generality we assume that  $\underline{\omega}= (1, \omega) \in \mathbb{R}^2_{+}$ is a vector satisfying $(1, \omega^2) \cdot m \neq 0$ for all $ m \in \mathbb{Z}^2$.

 In this section, we start by recalling that our original NLS \eqref{irrNLS} can be viewed as an infinite dimensional Hamiltonian system where the unknowns are the Fourier coefficients  $\hat \psi$ of the original solution $\psi$.  In this frame the associated   infinite dimensional Hamiltonian is
	\begin{align}\label{ham}
		H( \hat{\psi}) &= \tfrac{1}{2} \sum_{k \in \mathbb{Z}^2} \lambda_k |\hat{\psi}_k|^2 + \tfrac{1}{4}\sum_{ k_1+k_2= k_3+k_4} \hat{\psi}_{k_1} \hat{\psi}_{k_2}\bar{\hat{\psi}}_{k_3}\bar{\hat{\psi}}_{k_4}\\\label{HP}
				&=:H_0 + P,
	\end{align}
	where, for $k=(m, \ell) \in \mathbb{Z}^2$,  $\lambda_k:= m^2+\omega^2\ell^2$ are the eigenvalues of the Laplacian.
	In order to conduct our analysis we need to study not only the resonant set, but  also what we define below to be  ``quasi-resonant" sets. These sets will be also analyzed numerically in Section \ref{numerical}. 
	
	\subsection{The Quasi-resonance Condition and the Irrationality of the Torus}

   \begin{definition}\label{quasi-res} For any fixed $\Lambda>0$ and $\tau>1$, we say that any quartet of frequencies, $(\lambda_{k_1}, \lambda_{k_2}, \lambda_{k_3}, \lambda_{k_4})$, or the corresponding quartet of indices, $(k_1, k_2, k_3, k_4)$, are {\bf $(\Lambda, \tau)$-quasi resonant} if they satisfy $k_1+k_2-k_3-k_4=0$ and

	\begin{align}\label{nonres}
	\left|\lambda_{k_1}+\lambda_{k_2}-\lambda_{k_3}-\lambda_{k_4}\right| &\leq \frac{\Lambda}{ \left(|k_{1}|^2+|k_{2}|^2+|k_{3}|^2+|k_{4}|^2\right)^{1+\tau}}
	\end{align}
We  say that a quartet, $(\lambda_{k_1}, \lambda_{k_2}, \lambda_{k_3}, \lambda_{k_4})$, is not quasi-resonant if \eqref{nonres} does not hold.

For completeness we also recall here that if $k_1+k_2+k_3+k_4=0$ and
$\lambda_{k_1}+ \lambda_{k_2}-\lambda_{k_3}- \lambda_{k_4}=0$
then we say that the
quartet, $(\lambda_{k_1}, \lambda_{k_2}, \lambda_{k_3}, \lambda_{k_4})$ is resonant.
\end{definition}

		One can represent the sum, $\lambda_{k_1}+\lambda_{k_2}-\lambda_{k_3}-\lambda_{k_4}$ as $p+\omega^2 q$ where $p$ is the sum of the squares of  the first components of the indices and $q$ represents the sum of the squares of the second components. For arbitrary irrational numbers $\omega^2$, there exists a decreasing function, $g: \mathbb{Z}_+ \rightarrow \mathbb{R}_+$ such that for all $p, q \in \mathbb{Z}_+$
			\begin{align*}
				\left|\frac{p}{q}-\omega^2\right|\geq g(q)
			\end{align*}
			
			If one were to follow the argument  of the last two authors in \cite{staffilani2020stability}, it would become apparent  that the rate of $g$ determines the loss of regularity that one would expect from a  normal form argument  associated with the dispersion relation $\lambda_k$.

		\begin{theorem}\label{gencondition}
		There exists a full measure subset of $\mathbb{R}$, $E,$ such that if $\alpha\in E$, then for any $C>0$, $\tau>0$ there are $N(\alpha,\tau, C)$   finitely many solutions to
			\begin{align}\label{Rothcorineq}
				\left|\frac{p}{q}-\alpha\right| \leq \frac{C}{q^{2+\tau}}
			\end{align}
			for any  $(p, q) \in \mathbb{Z}\times \mathbb{Z}_+$ .
		\end{theorem}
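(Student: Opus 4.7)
The plan is to prove this via a standard Borel--Cantelli argument from metric Diophantine approximation (essentially the convergent half of Khintchine's theorem), combined with a monotonicity observation to reduce the uncountable family of conditions indexed by $(C,\tau)$ to a countable one.

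First, fix $C>0$, $\tau>0$, and a compact interval $I=[-N,N]$. For each $q\in\mathbb{Z}_+$, define
\begin{equation*}
A_q(C,\tau) := \Bigl\{\alpha\in I : \exists\, p\in\mathbb{Z},\ \bigl|\tfrac{p}{q}-\alpha\bigr|\leq \tfrac{C}{q^{2+\tau}}\Bigr\}.
\end{equation*}
The set $A_q(C,\tau)$ is a union of intervals of length $2C/q^{2+\tau}$ centered at the rationals $p/q \in I$, of which there are at most $2Nq + 1$. Therefore $|A_q(C,\tau)| \leq (2Nq+1)\cdot 2C q^{-(2+\tau)} \lesssim_{N,C}\, q^{-(1+\tau)}$. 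Since $\tau>0$, $\sum_{q\geq 1}|A_q(C,\tau)|<\infty$.

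Next, apply the Borel--Cantelli lemma: the set
\begin{equation*}
B(C,\tau,N) := \limsup_{q\to\infty} A_q(C,\tau) = \bigcap_{Q\geq 1}\bigcup_{q\geq Q} A_q(C,\tau)
\end{equation*}
has Lebesgue measure zero. Its complement in $I$ consists precisely of those $\alpha\in I$ for which $\alpha\in A_q(C,\tau)$ for only finitely many $q$; for each such $q$ there are at most $2Nq+1$ admissible values of $p$, so there are only finitely many pairs $(p,q)$ satisfying \eqref{Rothcorineq}. Thus the set $E(C,\tau,N) := I\setminus B(C,\tau,N)$ is a full-measure subset of $I$ on which the conclusion holds for the specific parameters $(C,\tau)$.

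Finally, to get a single full-measure set $E\subset\mathbb{R}$ that works for all $(C,\tau)$ simultaneously, use the elementary monotonicity: if $C'\leq C$ and $\tau'\geq\tau$ then every solution of \eqref{Rothcorineq} with parameters $(C',\tau')$ is also a solution with parameters $(C,\tau)$. Hence it suffices to define
\begin{equation*}
E := \bigcap_{n,m,N\in\mathbb{Z}_+} E(n,\tfrac{1}{m},N),
\end{equation*}
which is a countable intersection of full-measure sets and therefore has full measure in $\mathbb{R}$. For any $\alpha\in E$ and any $C>0$, $\tau>0$, pick integers $n\geq C$ and $m\geq 1/\tau$; then the finitely many solutions for $(n,1/m)$ dominate those for $(C,\tau)$, yielding the claim. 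The argument is essentially routine; the only point requiring care is the quantifier juggling in this last step, which is why isolating the countable family $\{(n,1/m)\}$ and exploiting monotonicity is the cleanest route.
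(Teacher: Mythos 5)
Your argument is correct in substance and is the standard one. For comparison: the paper does not actually prove Theorem \ref{gencondition} at all --- it is stated without proof as a known fact from metric Diophantine approximation (it is the convergence half of Khintchine's theorem, i.e.\ a direct Borel--Cantelli consequence), and is then used as a black box in the proof of Corollary \ref{quasiroth}. So your proposal supplies exactly the omitted classical argument: the measure estimate $|A_q(C,\tau)|\lesssim_{N,C} q^{-(1+\tau)}$, summability from $\tau>0$, Borel--Cantelli, and the reduction of the uncountable parameter family to the countable grid $(n,1/m)$ via monotonicity of the right-hand side of \eqref{Rothcorineq} in $C$ and $\tau$. All of these steps are sound.

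One small set-theoretic slip in the last step: as written, $E:=\bigcap_{n,m,N}E(n,\tfrac1m,N)$ is contained in $\bigcap_N[-N,N]=[-1,1]$, since each $E(n,\tfrac1m,N)$ is by definition a subset of $[-N,N]$; this $E$ cannot have full measure in $\mathbb{R}$. The fix is to work with the exceptional sets instead: each $B(n,\tfrac1m,N)$ is Lebesgue null, so $E:=\mathbb{R}\setminus\bigcup_{n,m,N}B(n,\tfrac1m,N)$ has full measure in $\mathbb{R}$, and for a given $\alpha\in E$ one applies the finiteness conclusion with any $N\geq|\alpha|$ (the conclusion for $\alpha$ does not depend on which such $N$ is chosen). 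With that one-line correction the proof is complete.
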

		
		The above theorem \cite{khintchine1924einige} will provide control over the number of $(\Lambda, \tau)$-quasi resonant quartets that are not resonant assuming that $\omega$ belongs to the generic set defined in Theorem \ref{gencondition}. Since we would like to use $\Lambda$ as a parameter, we will need to keep track of the parameter $\Lambda$ on the size of the class  of $(\Lambda, \tau)$-quasi resonant quartets that are not resonant.

	\begin{corollary}\label{quasiroth}
		For any $\Lambda>0$ and $\tau>0$ there exist only finitely many quartets $(k_1, k_2, k_3, k_4)$ satisfying
		\begin{align}\label{freqbound}
			0<\left|\lambda_{k_1}+\lambda_{k_2}-\lambda_{k_3}-\lambda_{k_4}\right| &\leq \frac{\Lambda}{ \left(|k_{1}|^2+|k_{2}|^2+|k_{3}|^2+|k_{4}|^2\right)^{1+\tau}}.
		\end{align}
		Moreover, there exists $C_{\omega}>0$ such that for all $\tau>1$ and $\Lambda>1$, there are no quartets satisfying both
		\begin{align*}
		    \max(|k_1|, |k_2|, |k_3|, |k_4|)>C_{\omega}\Lambda^{\frac{1}{2(\tau-1)}}
		\end{align*}
		and inequality \eqref{freqbound}.
	\end{corollary}
	\begin{proof}
        Let $k_i=(m_i, \ell_i)$ for $i =1, 2, 3, 4$ and note the identity
	    \begin{align*}
	        \lambda_{k_1}+\lambda_{k_2}-\lambda_{k_3}-\lambda_{k_4}&=m_1^2+\omega^2\ell_1^2+m_2^2+\omega^2\ell_2^2-(m_3^2+\omega^2\ell_3^2)-(m_4^2+\omega^2\ell_4^2)\\
	        &=m_1^2+m_2^2-m_3^2-m_4^2+\omega^2(\ell_1^2+\ell_2^2-\ell_3^2-\ell_4^2)
	    \end{align*}
	    Let $p=m_1^2+m_2^2-m_3^2-m_4^2$ and $q=\ell_1^2+\ell_2^2-\ell_3^2-\ell_4^2$, which we can assume positive without loss of generality. Then Theorem \ref{gencondition} implies that for any $\Lambda >0$ and $\tau>0$ there are only finitely many $p \in \mathbb{Z}$ and $q \in \mathbb{Z}_+$ that satisfy
	    \begin{align*}
	        |p-\omega^2 q| \leq \Lambda q^{-(1+\tau)}
	    \end{align*}
	    Since
	    \begin{align*}
	        \ell_1^2+\ell_2^2-\ell_3^2-\ell_4^2 \leq\ell_1^2+\ell_2^2+\ell_3^2+\ell_4^2 \leq |k_{1}|^2+|k_{2}|^2+|k_{3}|^2+|k_{4}|^2
	    \end{align*}
	    we can conclude that there are only finitely many $p \in \mathbb{Z}$ and $q \in \mathbb{Z}_+$ satisfying
	    \begin{align}\label{pqbound}
	        |p-\omega^2 q| \leq \frac{\Lambda}{ \left(|k_{1}|^2+|k_{2}|^2+|k_{3}|^2+|k_{4}|^2\right)^{1+\tau}}
	    \end{align}
	    Now for any fixed $p \in \mathbb{Z}$ and $q \in \mathbb{Z}_+$, there are infinitely many quartets $k_1, k_2, k_3, k_4$ such that $p=m_1^2+m_2^2-m_3^2-m_4^2$ and $q=\ell_1^2+\ell_2^2-\ell_3^2-\ell_4^2$. However, $q \in \mathbb{Z}_+$ implies that $|p-\omega^2 q|>0$, so in order for \eqref{pqbound} to be satisfied it is necessary that
	    \begin{align*}
	        |k_i|\leq \Lambda^{1/(2(1+\tau))} |p-\omega^2 q|^{-1/(2(1+\tau))}
	    \end{align*}
	    for all $i=1, 2, 3, 4$. Therefore, for every fixed $p \in \mathbb{Z}$ and $q \in \mathbb{Z}_+$ there are only finitely many quartets $k_1, k_2, k_3, k_4$ such that \eqref{pqbound} holds. Since there are only finitely many $p$ and $q$ staisfying the Diophantine inequality, \eqref{Rothcorineq}, we have only finitely many quartets satisfying
	    \begin{align*}
	        |p-\omega^2 q| &\leq \frac{\Lambda}{ \left(|k_{1}|^2+|k_{2}|^2+|k_{3}|^2+|k_{4}|^2\right)^{1+\tau}}\\
	        \Leftrightarrow 0<\left|\lambda_{k_1}+\lambda_{k_2}-\lambda_{k_3}-\lambda_{k_4}\right| &\leq \frac{\Lambda}{ \left(|k_{1}|^2+|k_{2}|^2+|k_{3}|^2+|k_{4}|^2\right)^{1+\tau}}
	    \end{align*}
	    The preceding sentence also implies that there exists $M_{\omega, \Lambda, \tau}<\infty$ such that
	    \begin{align*}
	        M_{\omega, \Lambda, \tau}:= \sup\left\{ \max_{i=1, 2, 3, 4} |k_i|~:~ (k_1, k_2, k_3, k_4) \mbox{ satisfies } \eqref{freqbound}\right\}
	    \end{align*}

	 Now assume $\tau>1$ and $\Lambda>1$. From the above argument, there exists a $M_{\omega}:=M_{\omega, 1, 1}$ such that there are no non-trivial solution to the inequality
	 \begin{align*}
		|\lambda_{k_1}+\lambda_{k_2}-\lambda_{k_3}-\lambda_{k_4}| \leq \frac{1}{ \left(|k_{1}|^2+|k_{2}|^2+|k_{3}|^2+|k_{4}|^2\right)^{2}}
	 \end{align*}
	 if $\max_{i=1, 2, 3, 4} |k_i|>M_{\omega, 1, 1}=:M_{\omega}$.
	
	 Furthermore, if $\Lambda\cdot\left(|k_{1}|^2+|k_{2}|^2+|k_{3}|^2+|k_{4}|^2\right)^{-(\tau-1)} \leq 1$ , then
	 \begin{align*}
	 	 \frac{\Lambda}{ \left(|k_{1}|^2+|k_{2}|^2+|k_{3}|^2+|k_{4}|^2\right)^{1+\tau}} \leq \frac{1}{ \left(|k_{1}|^2+|k_{2}|^2+|k_{3}|^2+|k_{4}|^2\right)^{2}}
	 \end{align*}
	which implies that there are no non-trivial solutions to the following inequality
	\begin{align*}
	 	|\lambda_{k_1}+\lambda_{k_2}-\lambda_{k_3}-\lambda_{k_4}|&\leq \frac{\Lambda}{ \left(|k_{1}|^2+|k_{2}|^2+|k_{3}|^2+|k_{4}|^2\right)^{1+\tau}}
	 \end{align*}
	 as long as
	 \begin{align*}
	     \max_{i=1, 2, 3, 4} |k_i|> \max\left( M_{\omega}, \Lambda^{1/2(\tau-1)}  \right).
	 \end{align*}
	 Finally, since $M_{\omega}\Lambda^{1/2(\tau-1)}\geq \max\left( M_{\omega}, \Lambda^{1/2(\tau-1)}  \right)$, it suffices to assume that
	 \begin{align*}
	     \max_{i=1, 2, 3, 4} |k_i|> M_{\omega}\Lambda^{1/2(\tau-1)}.
	 \end{align*}
	 We now set $C_{\omega}:= M_{\omega}$.

	\end{proof}

		The following lemma characterizes the resonant quartets of frequencies. This characterization will be crucial to understanding the dynamics of the normalized system \eqref{reso} below.
	\begin{lemma}\label{axisparallel}
			If
			\begin{align*}
			    \lambda_{k_1}+\lambda_{k_2}-\lambda_{k_3}-\lambda_{k_4}=0
			\end{align*}
			and
			\begin{align*}
			    k_1+k_2-k_3-k_4=0
			\end{align*}
			 then the quartet $(k_1, k_2, k_3, k_4)$ forms an axis parallel rectangle in $\mathbb{Z}^2$ (possibly degenerate).
	\end{lemma}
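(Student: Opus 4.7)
The plan is to use the assumption that $\omega^2$ is irrational to decouple the constraint into two one-dimensional conditions, and then exploit the elementary fact that two integers are determined by their sum and sum of squares.

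First, writing $k_i = (m_i, \ell_i)$, I expand the dispersion constraint as
\begin{align*}
\lambda_{k_1}+\lambda_{k_2}-\lambda_{k_3}-\lambda_{k_4} = (m_1^2+m_2^2-m_3^2-m_4^2) + \omega^2(\ell_1^2+\ell_2^2-\ell_3^2-\ell_4^2) = 0.
\end{align*}
Since $\omega^2$ is irrational (indeed, by the standing assumption at the start of Section~\ref{sec.birkhoff}, $(1,\omega^2)\cdot m \neq 0$ for all nonzero $m\in\mathbb Z^2$), both integer coefficients must vanish separately:
\begin{align*}
m_1^2+m_2^2 = m_3^2+m_4^2, \qquad \ell_1^2+\ell_2^2 = \ell_3^2+\ell_4^2.
\end{align*}
The momentum constraint $k_1+k_2-k_3-k_4=0$ likewise splits as $m_1+m_2=m_3+m_4$ and $\ell_1+\ell_2=\ell_3+\ell_4$.

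Second, I apply the following observation coordinate-by-coordinate: if $a+b=c+d$ and $a^2+b^2=c^2+d^2$ for integers $a,b,c,d$, then $ab=cd$ (using $(a+b)^2-(a^2+b^2)=2ab$), so $a,b$ and $c,d$ are the two roots of the same quadratic $X^2-(a+b)X+ab=0$, giving $\{a,b\}=\{c,d\}$. Applied to each coordinate, this yields $\{m_1,m_2\}=\{m_3,m_4\}$ and $\{\ell_1,\ell_2\}=\{\ell_3,\ell_4\}$.

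Finally, I match the unordered pairs: up to swapping the labels of $k_3$ and $k_4$, we may assume $m_3=m_1$ and $m_4=m_2$. For the vertical coordinates there are then two possibilities. If $\ell_3=\ell_1$ and $\ell_4=\ell_2$, then $k_3=k_1$ and $k_4=k_2$, a degenerate rectangle. If instead $\ell_3=\ell_2$ and $\ell_4=\ell_1$, then the four indices are $(m_1,\ell_1),(m_2,\ell_2),(m_1,\ell_2),(m_2,\ell_1)$, the vertices of an axis-parallel (possibly degenerate if $m_1=m_2$ or $\ell_1=\ell_2$) rectangle, as claimed. There is no real obstacle here: the only subtle point is invoking irrationality of $\omega^2$ to decouple the dispersion relation, after which the argument reduces to an elementary algebraic identification of two pairs sharing the same elementary symmetric polynomials.
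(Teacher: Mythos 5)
Your proof is correct and follows essentially the same route as the paper: irrationality of $\omega^2$ splits the dispersion condition into two integer equations, and each coordinate pair is then recovered from its sum and sum of squares. Your quadratic-polynomial justification of the step $\{m_1,m_2\}=\{m_3,m_4\}$ is a nice explicit filling-in of a step the paper merely asserts.
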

		
	\begin{proof}
	    First, if  $k_i=(m_i, \ell_i)$ for $i =1, 2, 3, 4$, then
	        \begin{align*}
	            \lambda_{k_1}+\lambda_{k_2}-\lambda_{k_3}-\lambda_{k_4}=0
	        \end{align*}
	   can be written as
	        \begin{align*}
	            &m_1^2+\omega^2\ell_1^2+m_2^2+\omega^2\ell_2^2-(m_3^2+\omega^2\ell_3^2)-(m_4^2+\omega^2\ell_4^2)=0\\
	            &\Leftrightarrow m_1^2+m_2^2-m_3^2-m_4^2+\omega^2(\ell_1^2+\ell_2^2-\ell_3^2-\ell_4^2)=0.
	        \end{align*}
	   Since $\omega^2$ is irrational,
	        \begin{align*}
	           m_1^2+m_2^2-m_3^2-m_4^2&=0  \mbox{ and }\\
	           \ell_1^2+\ell_2^2-\ell_3^2-\ell_4^2&=0.
	        \end{align*}
	   The condition $k_1+k_2-k_3-k_4=0$ implies that $m_1+m_2-m_3-m_4=0$ and $\ell_1+\ell_2-\ell_3-\ell_4=0$. These identities imply that
	        \begin{align*}
	            m_1&=m_3 \mbox{ and } m_2=m_4  \mbox{ or } \\
	            m_1&=m_4 \mbox{ and } m_2= m_3
	        \end{align*}
	        and
	        \begin{align*}
	            \ell_1&=\ell_3 \mbox{ and } \ell_2=\ell_4  \mbox{ or } \\
	            \ell_1&=\ell_4 \mbox{ and } \ell_2= \ell_3
	        \end{align*}
	   In either combination $k_1, k_2, k_3$, and $k_4$ form a (possibly degenerate) axis-parallel rectangle.
	\end{proof}

\section{Dynamics of the Quasi-resonant System} \label{sec.normal}

We now  study the dynamics of the 
system of equations 
	\begin{align}\label{resoNLS}
		\dot{z}_k =i\lambda_k z_k + i\sum_{k=k_1+k_2-k_3}^{**} z_{k_1}z_{k_2}\bar{z}_{k_3} ,
	\end{align}
where the $\sum^{**}$ signifies that the sum is taken over $(\Lambda, \tau)$-quasi-resonant quartets that satisfy
	\begin{align}\label{reso}
	|\lambda_{k_1}+\lambda_{k_2}-\lambda_{k_3}-\lambda_{k_4}| \leq \frac{\Lambda}{ \left(|k_{1}|^2+|k_{2}|^2+|k_{3}|^2+|k_{4}|^2\right)^{1+\tau}}.
	\end{align}
	Briefly focusing on the second  term of the vector field in \eqref{resoNLS}: the conclusion of Corollary \ref{quasiroth} implies that if the maximum of $|k|$, $|k_1|$, $|k_2|$, and  $|k_3|$ is large enough (depending on $\Lambda$ and $\omega$), then the inequality $|\lambda_{k_1}+\lambda_{k_2}-\lambda_{k_3}-\lambda_{k_4}| \leq \Lambda(|k|^2+|k_1|^2+|k_2|^2+|k_3|^2)^{-(1+\tau)}$ does not have any nontrivial solutions. Therefore, we can further decompose the second term in the following way:
	 \begin{align} \label{truncationbound}
	 	&\sum_{k=k_1+k_2-k_3\atop |\lambda_{k_1}+\lambda_{k_2}-\lambda_{k_3}-\lambda_{k}| \leq \Lambda(|k|^2+|k_1|^2+|k_2|^2+|k_3|^2)^{-(1+\tau)}}   z_{k_1}z_{k_2}\bar{z}_{k_3}\\
	 	&= \sum_{k=k_1+k_2-k_3\atop \lambda_{k_1}+\lambda_{k_2}-\lambda_{k_3}-\lambda_{k}=0}   z_{k_1}z_{k_2}\bar{z}_{k_3}
	 	+ \sum_{k=k_1+k_2-k_3 \mbox{ and }|k|^2+|k_1|^2+|k_2|^2+|k_3|^2 <C_{\Lambda, \omega, \tau}\atop 0\neq|\lambda_{k_1}+\lambda_{k_2}-\lambda_{k_3}-\lambda_{k}| \leq \Lambda(|k|^2+|k_1|^2+|k_2|^2+|k_3|^2)^{-(1+\tau)}}   z_{k_1}z_{k_2}\bar{z}_{k_3} \nonumber
	 \end{align}
	 By Corollary \ref{quasiroth}, $C_{\Lambda, \omega, \tau}$ can be set equal to $C_{\omega}\Lambda ^{1/(2(\tau-1))}$.
	
	 \medskip
	
We now restate Theorem \ref{mainthm2} in the frequency space and we prove it.
\begin{theorem}\label{lemmaT}
Let $v_0 = \{v_k\}_{k \in \mathbb{Z}^2} \in h^{s}, s\geq 0$. There exists a unique, global-in-time solution $z \in C^1([0, \infty); h^{s-2}) \cap C([0,\infty);h^{s})$ to the Cauchy problem:
	\begin{equation}\label{star1}
		\left\{ \begin{array}{l}\dot{z}_k =i\lambda_k z_k \pm i\sum_{k=k_1+k_2-k_3}^{**} z_{k_1}z_{k_2}\bar{z}_{k_3} \\
		z(0)=v_0
		  \end{array}  \right.
	\end{equation}
\end{theorem}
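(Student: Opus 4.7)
The plan is to split the quasi-resonant nonlinearity $\sum^{**}$ into its pure resonant piece and a finite truly-quasi-resonant remainder, to establish a trilinear estimate of the form $\|N(z)\|_{h^s}\lesssim\|z\|_{\ell^2}^2\|z\|_{h^s}$, and then to combine local well-posedness with mass conservation in order to globalize. Using the decomposition \eqref{truncationbound} together with Corollary~\ref{quasiroth}, I write
\begin{equation*}
\sum_{k=k_1+k_2-k_3}^{**}z_{k_1}z_{k_2}\bar z_{k_3}\;=\;N^{\rm res}_k+N^{\rm fin}_k,
\end{equation*}
where $N^{\rm res}$ runs over the genuinely resonant quartets ($\lambda_{k_1}+\lambda_{k_2}-\lambda_{k_3}-\lambda_{k_4}=0$) and $N^{\rm fin}$ runs over the finitely many quasi-resonant quartets with $\max_i|k_i|\le C_{\omega,\Lambda,\tau}$. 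Since $N^{\rm fin}$ depends on only finitely many Fourier modes and is cubic in them, it is trivially locally Lipschitz on every $h^s$; all the content lies in $N^{\rm res}$.

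By Lemma~\ref{axisparallel}, every resonant quartet is a (possibly degenerate) axis-parallel rectangle in $\mathbb Z^2$, which forces
\begin{equation*}
N^{\rm res}_{(m,\ell)}\;=\;\sum_{(m',\ell')\in\mathbb Z^2}z_{(m,\ell')}\,z_{(m',\ell)}\,\bar z_{(m',\ell')}
\end{equation*}
up to fixed combinatorial constants absorbing the degenerate rectangles. The key observation I would develop is that the matrix
\begin{equation*}
\alpha_{m,m'}\;:=\;\sum_{\ell'}z_{(m,\ell')}\,\bar z_{(m',\ell')}
\end{equation*}
is precisely the $(m,m')$ entry of $ZZ^{*}$, where $Z=(z_{(m,\ell)})_{m,\ell}$ is the Hilbert--Schmidt matrix encoding the sequence $z$. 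Consequently $\|\alpha\|_{\rm op}\le\|Z\|_{\rm HS}^{2}=\|z\|_{\ell^2}^{2}$, and since for each fixed $\ell$ the row $N^{\rm res}_{(\cdot,\ell)}$ is simply $\alpha$ applied to the row $z_{(\cdot,\ell)}$, summing the inequality $\|\alpha z_{(\cdot,\ell)}\|_{\ell^{2}_m}\le\|\alpha\|_{\rm op}\|z_{(\cdot,\ell)}\|_{\ell^{2}_m}$ over $\ell$ yields $\|N^{\rm res}\|_{\ell^{2}}\lesssim\|z\|_{\ell^2}^{3}$. To pass to $h^s$ for $s>0$, I would distribute the Sobolev weight via $\langle k\rangle^s\lesssim\langle(m,\ell')\rangle^s+\langle(m',\ell)\rangle^s+\langle(m',\ell')\rangle^s$ and re-apply the same Hilbert--Schmidt bound to each of the three resulting trilinear terms, obtaining
\begin{equation*}
\|N^{\rm res}\|_{h^s}\;\lesssim\;\|z\|_{\ell^2}^{2}\,\|z\|_{h^s}.
\end{equation*}

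With this trilinear estimate in hand, local well-posedness in $h^s$ on an interval $T\sim\|v_0\|_{h^s}^{-2}$ is standard: one sets up the Duhamel formulation $z(t)=S(t)v_0\pm i\int_0^t S(t-s)\,N(z(s))\,ds$, where $S(t)$ is the unitary linear propagator $(S(t)w)_k=e^{it\lambda_k}w_k$, and runs a contraction argument in $C([0,T];h^s)$. The analogous bilinear bound on differences yields uniqueness, and $z\in C^1([0,T];h^{s-2})$ is read off the equation itself, since the linear part $\lambda_k z_k$ costs at most two derivatives while the nonlinearity costs none. For globalization I observe that the quasi-resonance constraint $k_1+k_2=k_3+k_4$ is $U(1)$-invariant, so the Hamiltonian underlying \eqref{star1} is phase-invariant, and a direct computation gives conservation of the mass $\|z(t)\|_{\ell^2}$ in both the focusing and defocusing case. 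Substituting this conservation law into the trilinear estimate reduces $\|N(z(t))\|_{h^s}$ to a constant multiple of $\|z(t)\|_{h^s}$, so a Gronwall bootstrap on the Duhamel equation produces $\|z(t)\|_{h^s}\le\|v_0\|_{h^s}\,e^{C(\|v_0\|_{\ell^2})t}$ and precludes any finite-time blow-up. The main obstacle, and the whole reason this problem is subtle, is the trilinear estimate for $N^{\rm res}$: the full 2D cubic NLS is not known to be well-posed in $L^{2}$, and it is precisely the axis-parallel rectangle geometry of Lemma~\ref{axisparallel}, itself a consequence of the irrationality of the torus, that endows $N^{\rm res}$ with its matrix-product structure and allows the Hilbert--Schmidt bound to close at the $\ell^2$ level.
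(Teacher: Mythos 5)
Your proposal is correct and follows essentially the same route as the paper: the same decomposition of the nonlinearity via Corollary~\ref{quasiroth} into the purely resonant part plus a finite quasi-resonant remainder, the same exploitation of the axis-parallel rectangle structure from Lemma~\ref{axisparallel} to obtain the trilinear bound $\|N(z)\|_{h^s}\lesssim\|z\|_{\ell^2}^2\|z\|_{h^s}$, and the same contraction-plus-mass-conservation globalization. The only difference is presentational: you package the key Cauchy--Schwarz step as an operator-norm bound on the Hilbert--Schmidt matrix $ZZ^{*}$, whereas the paper carries out the same $\ell^2$-level bilinear estimate by hand via duality after splitting the weight as $(1+|m|)^s+(1+|\ell|)^s$.
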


\begin{proof}
For the local well-posedness,  we look for the fixed point  in the convex set
	\begin{align*}
		D:= \left\{ z \in C([0, T];h^s) ~:~ z(0)=v_0 \mbox{ and } \sup_{t \in [0, T]}\|z(t)\|_{\ell^2} \leq 10 \|z(0)\|_{\ell^2}\right\}
	\end{align*}
of the map $\mathcal{L}: C([0,T]; h^s) \rightarrow C([0,T]; h^s)$ defined (after a gauge transformation) by

 \begin{align*}\mathcal{L}(z)_k &:= z_k(0) -i\int_0^t d\sigma\,
	 	\sum_{k=k_1+k_2-k_3\atop \lambda_{k_1}+\lambda_{k_2}-\lambda_{k_3}-\lambda_{k}=0}   z_{k_1}z_{k_2}\bar{z}_{k_3}(\sigma)
	 	\\
	 	&+ \sum_{k=k_1+k_2-k_3 \mbox{ and }|k|^2+|k_1|^2+|k_2|^2+|k_3|^2 <C_{\Lambda, \omega,\tau}\atop 0\neq|\lambda_{k_1}+\lambda_{k_2}-\lambda_{k_3}-\lambda_{k}| \leq \Lambda(|k|^2+|k_1|^2+|k_2|^2+|k_3|^2)^{-(1+\tau)}}   e^{i\sigma(\lambda_{k_1}+\lambda_{k_2}-\lambda_{k_3}-\lambda_{k})}z_{k_1}z_{k_2}\bar{z}_{k_3}(\sigma) \nonumber
\end{align*}
We will ignore the estimate of the second sum since by Corollary \ref{quasiroth} only finitely many frequencies are involved in that sum. We want to estimate
    \begin{align*}
        \left\|\sum_{k=k_1+k_2-k_3\atop \lambda_{k_1}+\lambda_{k_2}-\lambda_{k_3}-\lambda_{k}=0}   z_{k_1}z_{k_2}\bar{z}_{k_3}\right\|_{h^s}
   \end{align*}
and for this we use a duality argument. Assume that $\{y_k\}\in \ell^2$, then we estimate
    \begin{align*}
        \sum_k\sum_{k=k_1+k_2-k_3\atop \lambda_{k_1}+\lambda_{k_2}-\lambda_{k_3}-\lambda_{k}=0}   |k|^s z_{k_1}z_{k_2}\bar{z}_{k_3} \bar y_k.
    \end{align*}
 If $k=(m,l)$ then we first approximate  $|k|^s$ with  $[(1+|m|)^s +(1+|\ell|)^s]$ and split the sum
\begin{align*}
		&\sum_{k= (m, \ell) \in \mathbb{Z}^2 \atop}\left(\sum_{k=k_1+k_2-k_3\atop \lambda_{k_1}+\lambda_{k_2}-\lambda_{k_3}-\lambda_{k}=0}  |k|^s z_{k_1}z_{k_2}\bar{z}_{k_3}\bar{y}_k \right)\\
		&\sim \left(\sum_{(m,l)}\sum_{k=k_1+k_2-k_3\atop \lambda_{k_1}+\lambda_{k_2}-\lambda_{k_3}-\lambda_{k}=0}   [(1+|m|)^s +(1+|\ell|)^s]z_{k_1}z_{k_2}\bar{z}_{k_3}\bar{y}_k \right)\\
		&=\left(\sum_{(m,l)}\sum_{k=k_1+k_2-k_3\atop \lambda_{k_1}+\lambda_{k_2}-\lambda_{k_3}-\lambda_{k}=0}   [(1+|m|)^s ]z_{k_1}z_{k_2}\bar{z}_{k_3}\bar{y}_k \right)\end{align*}
	\begin{align*}&\hspace{1cm}+\left(\sum_{(m,l)}\sum_{k=k_1+k_2-k_3\atop \lambda_{k_1}+\lambda_{k_2}-\lambda_{k_3}-\lambda_{k}=0}   [(1+|l|)^s ]z_{k_1}z_{k_2}\bar{z}_{k_3}\bar{y}_k \right)\\	
	\end{align*}
Now we perform a further splitting of the first term thanks to the irrationality of the torus (the second term is handled similarly):
	\begin{align*}
		&\sum_{(m, l) }\sum_{k=k_1+k_2-k_3\atop \lambda_{k_1}+\lambda_{k_2}-\lambda_{k_3}-\lambda_{k}=0}   (1+|m|)^sz_{k_1}z_{k_2}\bar{z}_{k_3}\bar{y}_k \\
		&=\sum_{m}\sum_{m=m_1+m_2-m_3\atop m^2_1+m^2_2-m^2_3-m^2=0}  \sum_{\ell=\ell_1+\ell_2-\ell_3\atop \ell^2_1+\ell^2_2-\ell^2_3-\ell^2=0} (1+|m|)^s z_{(m_1, \ell_1)}z_{(m_2, \ell_2)}\bar{z}_{(m_3, \ell_3)}\bar{y}_{(m, \ell)} \\
		&=\sum_{m}\sum_{m=m_1+m_2-m_3\atop m^2_1+m^2_2-m^2_3-m^2=0}  (1+|m|)^s \sum_{\ell=\ell_1+\ell_2-\ell_3\atop \ell^2_1+\ell^2_2-\ell^2_3-\ell^2=0} z_{(m_1, \ell_1)}z_{(m_2, \ell_2)}\bar{z}_{(m_3, \ell_3)}\bar{y}_{(m, \ell)}
	\end{align*}
	Without loss of generality, fix $(m, m_1, m_2, m_3)$ with $m=m_1$, and $m_2=m_3$. Then
	\begin{align*}
		\sum_{\ell=\ell_1+\ell_2-\ell_3\atop \ell^2_1+\ell^2_2-\ell^2_3-\ell^2=0} z_{(m_1, \ell_1)}z_{(m_2, \ell_2)}\bar{z}_{(m_3, \ell_3)}\bar{y}_{(m, \ell)}
		=\sum_{\ell=\ell_1+\ell_2-\ell_3\atop \ell^2_1+\ell^2_2-\ell^2_3-\ell^2=0} z_{(m, \ell_1)}z_{(m_2, \ell_2)}\bar{z}_{(m_2, \ell_3)}\bar{y}_{(m, \ell)}
	\end{align*}
	and furthermore, there are two possibilities: $\ell=\ell_1$ and $\ell_2=\ell_3$, or $\ell=\ell_2$ and $\ell_1=\ell_3$. In the first case,
	\begin{align*}
		&\sum_{\ell=\ell_1+\ell_2-\ell_3\atop \ell^2_1+\ell^2_2-\ell^2_3-\ell^2=0} z_{(m, \ell_1)}z_{(m_2, \ell_2)}\bar{z}_{(m_2, \ell_3)}\bar{y}_{(m, \ell)}= \sum_{\ell=\ell_1, \ell_2=\ell_3} z_{(m, \ell_1)}z_{(m_2, \ell_2)}\bar{z}_{(m_2, \ell_3)}\bar{y}_{(m, \ell)}\\
		&\leq \left(\sum_{\ell_2} |z_{(m_2, \ell_2)}|^2 \right)\left( \sum_{\ell} |z_{(m, \ell)}|^2 \right)^{1/2}\left( \sum_{\ell} |y_{(m, \ell)}|^2 \right)^{1/2},
	\end{align*}
	
	and to finish we sum in $m_2$ and use Cauchy-Schwarz in $m$.
In the second case,
	\begin{align*}
		&\sum_{\ell=\ell_1+\ell_2-\ell_3\atop \ell^2_1+\ell^2_2-\ell^2_3-\ell^2=0} z_{(m, \ell_1)}z_{(m_2, \ell_2)}\bar{z}_{(m_2, \ell_3)}\bar{y}_{(m, \ell)}= \sum_{\ell=\ell_2, \ell_1=\ell_3} z_{(m, \ell_1)}z_{(m_2, \ell_2)}\bar{z}_{(m_2, \ell_3)}\bar{y}_{(m, \ell)}\\
		&= \left(\sum_{\ell} z_{(m_2, \ell)}\bar{y}_{(m, \ell)} \right)\left( \sum_{\ell_1} z_{(m, \ell_1)}\bar{z}_{(m_2, \ell_1)} \right)\\
		&\lesssim \left(\sum_{\ell} |z_{(m_2, \ell)}|^2\right)^{\frac{1}{2}}
		\left(\sum_{\ell} |y_{(m, \ell)}|^2\right)^{\frac{1}{2}}
		\left(\sum_{\ell_1} |z_{(m, \ell_1)}|^2\right)^{\frac{1}{2}}
		\left(\sum_{\ell_1} |z_{(m_2, \ell_1)}|^2\right)^{\frac{1}{2}}
	\end{align*}
and finally we use Cauchy-Schwarz in $m, m_2$ again. We hence obtain that for all $\{z_k\}\in D$ we have
	\begin{align}\label{sol}&\sup_{[0,T]}\|\mathcal{L}(z)_k\|_{h^s}\leq \|z_k(0)\|_{h^s}+ CT\sup_{[0,T]}\|z_k\|_{h^s}\sup_{[0,T]}\|z_k\|_{\ell^2}^2\\\nonumber&\leq  \|z_k(0)\|_{h^s}+
\sup_{[0,T]}\|z_k\|_{h^s}(10 \|z_k(0)\|_{\ell^2})^2CT.
	\end{align}
Therefore, if $B$ is the ball in $h^s$ centered at the origin with radius $r=2\|z_k(0)\|_{h^s}$ and $T\sim (\|z_k(0)\|_{\ell^2})^{-2}$ small enough, then $\mathcal{L}$ maps the ball centered at origin  and radius $B$ into itself.
Now we want to show that in a smaller interval of time $\mathcal{L}$ is a contraction on the ball $B$. So assume $\{z_k\}\in B$  and $\{w_k\} \in B$ and  take $t \in [-\delta,\delta]$, where $\delta$ will be determined below. Then  one can prove that
    \begin{align*}
        \sup_{[0,T]}\|\mathcal{L}(z)_k-\mathcal{L}(w)_k\|_{h^s}\leq  C\sup_{[0,T]}\|z_k-w_k\|_{h^s}
(\|z_k\|_{h^s} +\|w_k\|_{h^s})^2C\delta
    \end{align*}
and by picking
$\delta \sim r^{-2}$ small enough
one indeed has a contraction in the ball, $B$,  and hence a unique fixed point in the interval $[-\delta,\delta]$.
By repeating the argument in \eqref{sol} we can prove  that on the larger  interval   $[-T,T]$ solutions with initial data $\{z_k(0)\}$ are bounded by $B$. Hence
one can  iterate this argument $T/\delta$ times with the same $B$ to show the existence and uniqueness of solutions in the larger interval $[-T,T]$ since from \eqref{sol} the initial data at any time in this interval is bounded in the norm $h^s$ by the same constant $r$.

In order to iterate to a global in time well-posedness it is enough to prove that the $\ell^2$ norm of solutions to our initial value problem is conserved and hence the same argument as the one above can be repeated in the interval of time $[T,2T]$ and by induction in $[(n-1)T,nT]$ for all $n\in \mathbb{N}$. This would concludes the proof.

In order to show that  the $\ell^2$ norm of solution is conserved we simply show that the square of the $\ell^2$ norm Poisson commutes with the reduced Hamiltonian in the defocusing case, the focusing is the same:
    \begin{align*}
        \frac{d}{dt} \|z\|_{\ell^2}^2
            &=\sum_{k \in \mathbb{Z}^2} \dot{z}_k\bar{z}_k + \sum_{k\in \mathbb{Z}^2} z_k \dot{\bar{z}}_k\\
            &= \sum_k i \lambda_k|z_k|^2+ i \sum_k \left( \sum_{k=k_1+k_2-k_3}^{**} z_{k_1}z_{k_2}\bar{z}_{k_3} \right)\bar{z}_k   \\
             &\hspace{3cm}+ \sum_k -i \lambda_k|z_k|^2- i \sum_k \left( \sum_{k=k_1+k_2-k_3}^{**} \bar{z}_{k_1}\bar{z}_{k_2}z_{k_3} \right)z_k\\
            &=i \sum_k \left( \sum_{k=k_1+k_2-k_3}^{**} z_{k_1}z_{k_2}\bar{z}_{k_3} \right)\bar{z}_k- i \sum_k \left( \sum_{k=k_1+k_2-k_3}^{**} \bar{z}_{k_1}\bar{z}_{k_2}z_{k_3} \right)z_k\\
            &= i \mathscr{L} - i \bar{\mathscr{L}}= i \mathscr{L} - i\mathscr{L}\\
            &=0.
    \end{align*}

\end{proof}

Next we want to show that the dynamics of the quasi-resonant NLS \eqref{resoNLS}, with frequency localized data, takes place  in within a bounded ball. Also in this case we use the defocusing setting, but the focusing is similar.  Before we begin to examine this  dynamics, define the norm on $\mathbb{Z}^2$:
	\begin{align*}
		|k|_{\infty} = \max(|m|, |\ell|) \mbox{ where } k=(m, \ell)
	\end{align*}
	Note that $|k|_{\infty} \sim | k|$.

\begin{definition} \label{cutoff}
	Let $M>0$. Define the cut-off function $V_M:h^s \rightarrow h^s$ by
		\begin{align*}
			[V_M(z)]_k := \left\{ \begin{array}{lc} z_k & |k|_{\infty}>M\\ 0 & |k|_{\infty} \leq M.
			\end{array} \right.
		\end{align*}
\end{definition}

and define an auxiliary cut-off norm:
	\begin{align}\label{N}
		N_M(z)&:=  \sum_{k= (m, \ell) \in \mathbb{Z}^2 \atop |m|>M} [(1+|m|^2)^s +(1+|\ell|^2)^s]|z_k|^2 \\\nonumber
		&\hspace{2cm}+ \sum_{k= (m, \ell) \in \mathbb{Z}^2 \atop |\ell|>M} [(1+|m|^2)^s +(1+|\ell|^2)^s]|z_k|^2
	\end{align}

 We note that since $C_s^{-1} (1+|k|)^{2s} \leq (1+|m|^2)^s +(1+|\ell|^2)^s \leq C_s  (1+|k|)^{2s}$,
 	\begin{align*}
 		C^{-1}_s \|V_Mz\|^2_{h^s} \leq N_M(z) \leq C_s \|V_Mz\|^2_{h^s}
 	\end{align*}

\begin{proposition}\label{nocasc}
	Let  $\Lambda>0$. There exists a $M_0>0$ such that if $M>M_0$ and $y(t)$ is a solution to \eqref{resoNLS}, then
		\begin{align*}
			\frac{d}{dt} N_M(y) =0
		\end{align*}		
\end{proposition}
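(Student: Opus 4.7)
Plan: identify $N_M$ as a weighted sum of column and row $\ell^2$-masses and use the rectangle structure of exact resonances to show each is separately conserved for sufficiently high modes.

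First I would fix $M_0 := C_\omega \Lambda^{1/(2(\tau-1))}$ via Corollary \ref{quasiroth}, so that every $(\Lambda,\tau)$-quasi-resonant quartet that is not exactly resonant has all four indices bounded in magnitude by $M_0$. Hence for $M > M_0$, any quasi-resonant quartet in which some index satisfies $|m|>M_0$ or $|\ell|>M_0$ is forced to be exactly resonant, and by Lemma \ref{axisparallel} its four indices form an axis-parallel rectangle $\{(a,c),(a,d),(b,c),(b,d)\}$.

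Second I would analyse the action of a single such rectangle on the four corner amplitudes. Restricted to these four modes, the quartic part of the Hamiltonian associated with \eqref{resoNLS} is $H_R = 2\,\mathrm{Re}(W_R)$ with $W_R := z_{(a,c)} z_{(b,d)} \bar z_{(a,d)} \bar z_{(b,c)}$, and Hamilton's equations yield the rectangle-restricted contributions
\[
\tfrac{d}{dt}|z_{(a,c)}|^2\big|_R = \tfrac{d}{dt}|z_{(b,d)}|^2\big|_R = 4\,\mathrm{Im}(W_R), \qquad \tfrac{d}{dt}|z_{(a,d)}|^2\big|_R = \tfrac{d}{dt}|z_{(b,c)}|^2\big|_R = -4\,\mathrm{Im}(W_R).
\]
The two corners in column $a$ thus acquire equal and opposite contributions (and similarly for column $b$, row $c$, row $d$), so each rectangle preserves the $\ell^2$-mass of every column and every row intersecting its support.

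Combining the two steps, for every $m$ with $|m|>M_0$ the column mass $M_m(t) := \sum_{\ell} |y_{(m,\ell)}(t)|^2$ is preserved by \eqref{resoNLS} (every quasi-resonant quartet that acts on column $m$ must be a rectangle, and rectangles preserve column mass); symmetrically the row mass $R_\ell(t) := \sum_m |y_{(m,\ell)}(t)|^2$ is preserved whenever $|\ell|>M_0$. Recognising that for $M > M_0$
\[
N_M(y) = \sum_{|m|>M}(1+m^2)^s\, M_m(y) + \sum_{|\ell|>M}(1+\ell^2)^s\, R_\ell(y),
\]
i.e.\ that $N_M$ is a linear combination of conserved quantities, we conclude $\dot N_M = 0$.

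The main obstacle is the rectangle bookkeeping in the second step: the symmetrisation multiplicities in $\sum^{**}_{k=k_1+k_2-k_3}$ and the factor $\tfrac{1}{4}$ in the defining Hamiltonian \eqref{HP} must combine correctly to produce the alternating $(+,-,-,+)$ pattern at the corners, which is precisely what allows both column and row totals to telescope to zero within each rectangle.
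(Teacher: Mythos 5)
Your mechanism is the right one, and it is essentially the paper's: Corollary~\ref{quasiroth} confines the non-resonant quasi-resonances to $|k_i|\lesssim C_{\omega}\Lambda^{1/(2(\tau-1))}=:M_0$, Lemma~\ref{axisparallel} forces the surviving high-frequency interactions onto axis-parallel rectangles, and the Manley--Rowe sign pattern at the four corners makes every column mass $M_m$ with $|m|>M_0$ and every row mass $R_\ell$ with $|\ell|>M_0$ individually conserved. That part is correct (and the symmetrization/constant bookkeeping you worry about is harmless: the combinatorial factor multiplies $\mathrm{Im}(W_R)$ uniformly at all four corners, so it cannot spoil the cancellation pattern).

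The gap is the displayed identity
\[
N_M(y)=\sum_{|m|>M}(1+m^2)^s\, M_m(y)+\sum_{|\ell|>M}(1+\ell^2)^s\, R_\ell(y).
\]
It is false: expanding \eqref{N}, the weight of a mode $k=(m,\ell)$ in $N_M$ is $\bigl(\chi_{|m|>M}+\chi_{|\ell|>M}\bigr)\bigl[(1+m^2)^s+(1+\ell^2)^s\bigr]$, which contains the cross terms $\chi_{|m|>M}(1+\ell^2)^s$ and $\chi_{|\ell|>M}(1+m^2)^s$ that your right-hand side omits. Those cross terms are not combinations of conserved column/row masses, and a rectangle straddling the cutoff shows they are genuinely not conserved: take columns $a,b$ and rows $c,d$ with $|a|>M$ but $|b|,|c|,|d|\le M$ and $c^2\ne d^2$; only the two corners in column $a$ enter $N_M$, they carry opposite signs but the unequal weights $(1+a^2)^s+(1+c^2)^s$ and $(1+a^2)^s+(1+d^2)^s$, so the net contribution $4\,\mathrm{Im}(W_R)\bigl[(1+c^2)^s-(1+d^2)^s\bigr]$ survives. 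What your argument actually proves is conservation of the separable-weight functional $\widetilde N_M(y):=\sum_k\bigl[\chi_{|m|>M}(1+m^2)^s+\chi_{|\ell|>M}(1+\ell^2)^s\bigr]|y_k|^2$; indeed only weights of the form $f(m)+g(\ell)$ vanishing on $[-M,M]^2$ telescope over an arbitrary axis-parallel rectangle. Since $\widetilde N_M\sim\|V_My\|_{h^s}^2$, your quantity serves every downstream purpose (in particular Theorem~\ref{mainthm3}), and the mismatch points as much at an imprecision in the definition \eqref{N} as at your bookkeeping --- but as written, the final step equating your conserved quantity with $N_M$ does not close.
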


\begin{proof}

From Corollary \ref{quasiroth}, if we take 
\begin{equation}\label{m0}
M_0\sim C_{\omega}\Lambda^{1/(2(\tau-1))}
\end{equation}
 then for $k=(m, \ell)$, $|m|>M$ or $|\ell|>M$ we have

	\begin{align*}
	 	\dot{y}_k
	 	&= i\lambda_k y_k + i \sum_{k=k_1+k_2-k_3\atop \lambda_{k_1}+\lambda_{k_2}-\lambda_{k_3}-\lambda_{k}=0}   y_{k_1}y_{k_2}\bar{y}_{k_3}\\
	 	&+ i\sum_{k=k_1+k_2-k_3 \mbox{ and }|k|^2+|k_1|^2+|k_2|^2+|k_3|^2 <C_{\omega}^2\Lambda)^{1/(\tau-1)}\atop 0\neq|\lambda_{k_1}+\lambda_{k_2}-\lambda_{k_3}-\lambda_{k}| \leq \Lambda(|k|^2+|k_1|^2+|k_2|^2+|k_3|^2)^{-(1+\tau)}}   y_{k_1}y_{k_2}\bar{y}_{k_3} \\
	 	&=i\lambda_k y_k + i\sum_{k=k_1+k_2-k_3\atop \lambda_{k_1}+\lambda_{k_2}-\lambda_{k_3}-\lambda_{k}=0}   y_{k_1}y_{k_2}\bar{y}_{k_3}
	 \end{align*}

Via direct computation,
	\begin{align*}
		\tfrac{d}{dt} N_M(y(t)) &= \tfrac{d}{dt} \sum_{k= (m, \ell) \in \mathbb{Z}^2 \atop |m|>M} [(1+|m|^2)^s +(1+|\ell|^2)^s]|y_k|^2 \\
		&\hspace{2cm} + \tfrac{d}{dt}\sum_{k= (m, \ell) \in \mathbb{Z}^2 \atop |\ell|>M} [(1+|m|^2)^s +(1+|\ell|^2)^s]|y_k|^2
	\end{align*}

	Due to symmetry, the first term can be represented in the following way
	\begin{align*}
	\tfrac{d}{dt}& \sum_{k= (m, \ell) \in \mathbb{Z}^2 \atop |m|>M} [(1+|m|^2)^s +(1+|\ell|^2)^s]|y_k|^2\\
		&=  \sum_{k= (m, \ell) \in \mathbb{Z}^2 \atop |m|>M} [(1+|m|^2)^s +(1+|\ell|^2)^s](\dot{y}_k\bar{y}_k + y_k \dot{\bar{y}}_k)\\
		&= \sum_{k= (m, \ell) \in \mathbb{Z}^2 \atop |m|>M} \langle|k|\rangle \left(i\lambda_k y_k +i \sum_{k=k_1+k_2-k_3\atop \lambda_{k_1}+\lambda_{k_2}-\lambda_{k_3}-\lambda_{k}=0}   y_{k_1}y_{k_2}\bar{y}_{k_3}\right) \bar{y}_k \\
		&\hspace{2cm}+ \langle|k|\rangle y_k\left( \overline{i\lambda_k y_k+i \sum_{k=k_1+k_2-k_3\atop \lambda_{k_1}+\lambda_{k_2}-\lambda_{k_3}-\lambda_{k}=0}   y_{k_1}y_{k_2}\bar{y}_{k_3}\bar{y}_k} \right)
	\end{align*}
	where $ \langle|k|\rangle:= (1+|m|^2)^s +(1+|\ell|^2)^s$. Simplifying, we obtain
	\begin{align*}
	&\sum_{k= (m, \ell) \in \mathbb{Z}^2 \atop |m|>M} \langle|k|\rangle \left(i\lambda_k y_k +i \sum_{k=k_1+k_2-k_3\atop \lambda_{k_1}+\lambda_{k_2}-\lambda_{k_3}-\lambda_{k}=0}   y_{k_1}y_{k_2}\bar{y}_{k_3}\right) \bar{y}_k\\
	&\hspace{4cm}+ y_k\left( \overline{i\lambda_k y_k+i \sum_{k=k_1+k_2-k_3\atop \lambda_{k_1}+\lambda_{k_2}-\lambda_{k_3}-\lambda_{k}=0}   y_{k_1}y_{k_2}\bar{y}_{k_3}\bar{y}_k} \right) \\
		&=\sum_{k= (m, \ell) \in \mathbb{Z}^2 \atop |m|>M}i\langle|k|\rangle(|\lambda_ky_k|^2-|\lambda_ky_k|^2) \\
		&\hspace{2cm}+\sum_{k= (m, \ell) \in \mathbb{Z}^2 \atop |m|>M}   i \langle|k|\rangle\sum_{k=k_1+k_2-k_3\atop \lambda_{k_1}+\lambda_{k_2}-\lambda_{k_3}-\lambda_{k}=0}   y_{k_1}y_{k_2}\bar{y}_{k_3}\bar{y}_k\\
			&\hspace{4cm} +\sum_{k= (m, \ell) \in \mathbb{Z}^2 \atop |m|>M}   -i \langle|k|\rangle\sum_{k=k_1+k_2-k_3\atop \lambda_{k_1}+\lambda_{k_2}-\lambda_{k_3}-\lambda_{k}=0}   \bar{y}_{k_1}\bar{y}_{k_2}y_{k_3}y_k
	\end{align*}

 The remainder of the computation is the same as that of Theorem \ref{lemmaT}.
\end{proof}

\begin{remark}\label{integrability} As in the proof of Theorem \ref{lemmaT}, the essence of the preceding argument is that the irrationality of the torus leads to a decoupling of the dynamics of the normalized equation into two (or more for higher dimensional tori) disassociated one-dimensional {\bf resonant} cubic Nonlinear Schr\"odinger systems. It is a simple computation to verify that the one-dimensional resonant cubic NLS Hamiltonian Poisson commutes with $h^s$ norms in a similar fashion to what is shown in Theorem \ref{lemmaT} and Proposition \ref{nocasc}. 
\end{remark}

\begin{definition}\label{barr} We call the number $M_0$ introduced in \eqref{m0} the barrier for the quasi-resonant NLS initial value problem \eqref{irrNLS-res}.
\end{definition}

\medskip
We are now ready to prove Theorem \ref{mainthm3}

 \begin{proof} {\bf Proof of Theorem \ref{mainthm3}}. Let $\psi_0$ be such that $ \widehat{\psi_0}$ is supported in a ball of radius $N$. By \eqref{m0}  for the solution $\psi(t)$ we have  $N_{M_0}(\hat \psi(t))=0$ for all $t$ as long as
$M_0= max \{N, C_{\omega}\Lambda^{1/(2(\tau-1))} \}.$ Then we have that
\begin{eqnarray}\label{bound}\|\psi(t)\|_{H^s}^2&\lesssim& M_0^{2s}\|\chi_{M_0}\hat \psi(t)\|_{\l^2}^2+N_{M_0}(\hat \psi(t))\\
\nonumber&\lesssim& M_0^{2s}\|\hat \psi(t)\|_{\l^2}^2\lesssim M_0^{2s}L^2,
\end{eqnarray}
where in the last inequality we used the conservation of the $L^2$-norm, where $\|\psi(0)\|_{L^2}=L^2.$ 

\end{proof}


\section{Numerical Study}\label{numerical}
We now provide the details of our numerical simulation of \eqref{irrNLS} on the rational torus ($\omega^2=1$) and irrational torus ($\omega^2=\sqrt{2}$), as well as our numerical study of  the evolution of the Sobolev norm and energy spectrum on the two tori. We also study numerically  for the full NLS equation \eqref{irrNLS}  the equivalent notion of a barrier according to Definition \ref{barr} above. It is demonstrated through a variety of metrics that the energy cascade process on the irrational torus is consistently slower than on the rational torus.\\

\subsection{Numerical Setup}
We compute the evolution of Fourier coefficients $\hat{\psi}_k$ according to \eqref{irrNLS} via a pseudospectral method on a domain of $512\times512$ modes (with $257\times257$ alias-free modes) \cite{hrabski2020effect}.  The method of integrating factor combined with a 4th order Runge-Kutta scheme is used for the numerical integration, i.e., the linear and nonlinear terms are respectively integrated analytically and explicitly. The frequency definition $\lambda = m^2 + \omega^2\ell^2$ provides explicit dependence on $\omega^2$ for the rational/irrational tori, that appears only at the integrating factor (since the nonlinear term is free of derivatives). We begin with initial conditions
	\begin{align*}
	    \hat{\psi}_{0,k} = \left\{ \begin{array}{lr} C\exp(i\phi_k) & -2 \leq |k|_{\infty} \leq 2, \ k\neq (0,0)\\
	    0 & \text{otherwise},
		\end{array} \right.
	\end{align*}
with $C\in\mathbb{R}$ chosen to yield a prescribed Sobolev norm $R=\|\hat{\psi}_{0}\|_s$ and $\phi_k\in[0,2\pi]$ as uniformly-distributed random phases that are decorrelated with respect to $k$.  In order to make sure that our numerical results are consistent for different realizations of $\phi_k$, we perform 5 simulations starting from different $\phi_k$ realizations and present results considering all realizations.
In the numerical portion of this work, we use Sobolev index $s=2$. The solver is validated by preservation of the Hamiltonian and verifying 4th order convergence with time step $\Delta t$.

We note that an irrational number cannot be exactly represented in our numerical simulation. Instead, our  simulation  of  dynamics  on  an  irrational  torus  is  computed  with $\omega^2$ approximated by a double-precision floating-point number $\omega_*^2$. Nevertheless, a simple analysis shows that the use of $\omega_*^2$ is sufficient to preserve Lemma \ref{axisparallel}, i.e., it does not create spurious resonant quartets other than those in Lemma \ref{axisparallel}. The analysis is shown in Section \ref{App.Finite}.

\subsection{Results}

We begin by generating data for both tori over a range of $R$ for $T=20T_f$, where $T_f=2\pi$ is the fundamental period. For each $R$ and each realization of phase $\phi_k$, $M$, the barrier in this case,  is computed for a range of $\varepsilon$ such that $\|\chi_{B_M^c}\hat{\psi}(T)\|_s = \varepsilon$, where $\psi$ is the solution to the NLS initial value problem \eqref{irrNLS}. The median value of $M$ across the 5 phase realizations is plotted against $R$ for each $\varepsilon$ in Figure \ref{fig:mvr}, with uncertainty bars denoting the maximum and minimum $M$ from the 5 simulations. We find that $M$ consistently takes a smaller value on the irrational torus than on the rational torus, indicating a slower cascade of energy when $\omega^2$ is irrational. We also note that the differences in $M$ between the two tori vary with $R$, taking smaller values near the small and large values of $R$ in our tested range. We hypothesize that for relatively large values of $R$, i.e., the right-most portion of Figure \ref{fig:mvr}, the quasi-resonances on the irrational torus are sufficient to cascade the energy for limited $T$, yielding a small difference in $M$ when compared to the rational torus. For smaller values of $R$, even on the rational torus, the relatively small $T$ is not sufficient for the energy to cascade to significantly larger $M$ than on irrational torus.

\begin{figure}
    \centering
    \includegraphics[scale=1]{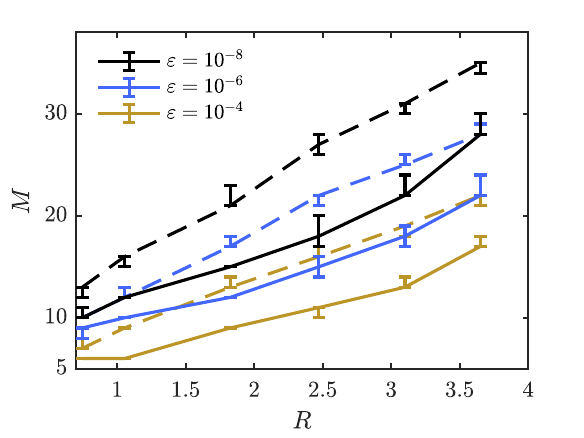}
    \caption{The relationship between $M$ and $R$ for different $\varepsilon$ on log-scale axes for $s=2$ and $T=20T_f$. Data for $\omega^2=\sqrt{2}$ (-----) and $\omega^2=1$ (--~--~--) are included, with lines showing the median value and uncertainty bars showing the maximum and minimum of $M$ among the 5 simulations.}
    \label{fig:mvr}
\end{figure}

We next look at the full energy spectrum to resolve directional differences in evolution on the two tori. The 2D energy spectra of the initial condition and its evolution on both tori are reported in Figure \ref{fig:contour} for $R=1.8263$,  for one realization of $\phi_k$. Aside from an obvious difference in the spread of energy between the tori, we see a higher degree of anisotropy in the spectrum of the irrational torus than that of the rational torus. The irrational torus data develops toward an axes-parallel contour, mainly because of the absence of diamond-type resonant quartets \cite{staffilani2020stability}. We confirm that the rational torus exhibits isotopic evolution for all tested $R$, and that the irrational torus exhibits anisotropic evolution for even the highest tested $R$.

\begin{figure}
    \centering
    \includegraphics[scale=1]{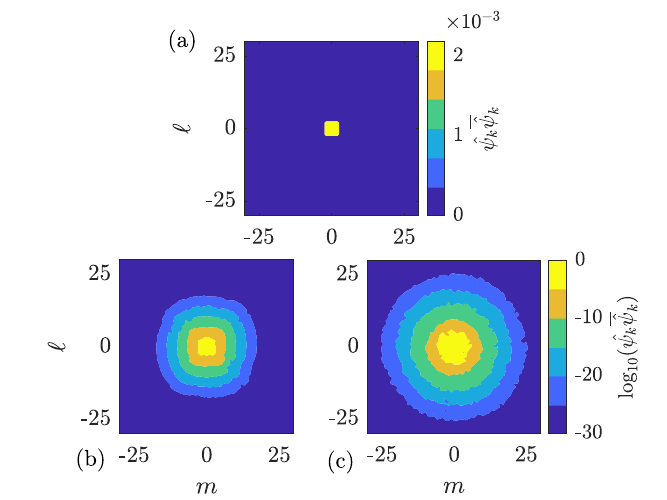}
    \caption{The 2D energy spectra of (a) the initial condition, (b) the irrational torus at $t=20T_f$ and (c) the rational torus at $t=20T_f$. Note that (b) and (c) share the color bar. The zero mode has amplitude $0$, but is colored for simplicity.}
    \label{fig:contour}
\end{figure}

The differences in spectral evolution are also reflected in the growth of the Sobolev norm $\|\hat{\psi}(t)\|_s$, which is plotted over $20T_f$ for all realizations with $R=1.8263$ on both tori in figure \ref{fig:rvt}. The norm on the irrational torus stays close to its initial value $R$, in contrast to the norm on the rational torus, which shows a steady growth. For all tested $R$, $\|\hat{\psi}(t)\|_s$ consistently grows faster on the rational torus than on the irrational torus. Growth of the Sobolev norm for irrational $\omega^2$ is possible in higher regimes of $R$, as depicted in figure \ref{fig:rvt_irr}, which extends to $t=100T_f$. For low $R$, oscillatory behavior of $\|\hat{\psi}(t)\|_s$ is resolved, while for higher $R$, we observe growth of the norm with a rate that increases with $R$. \\

\begin{figure}
    \centering
    \includegraphics[scale=1]{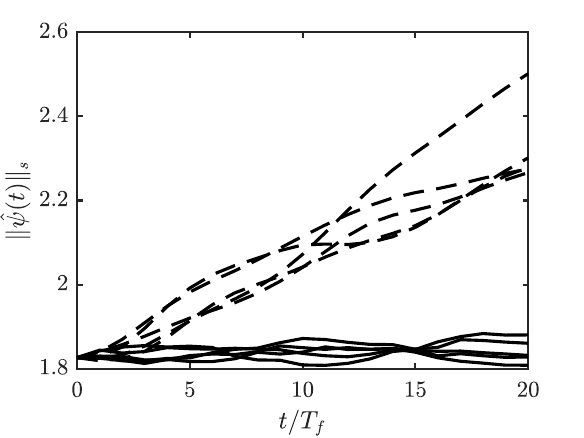}
    \caption{The growth of $\|\hat{\psi}(t)\|_s$ with 5 realizations of $\phi_k$} for $\omega^2=\sqrt{2}$ (-----) and $\omega^2=1$ (--~--~--).
    \label{fig:rvt}
\end{figure}

\begin{figure}
    \centering
    \includegraphics[scale=1]{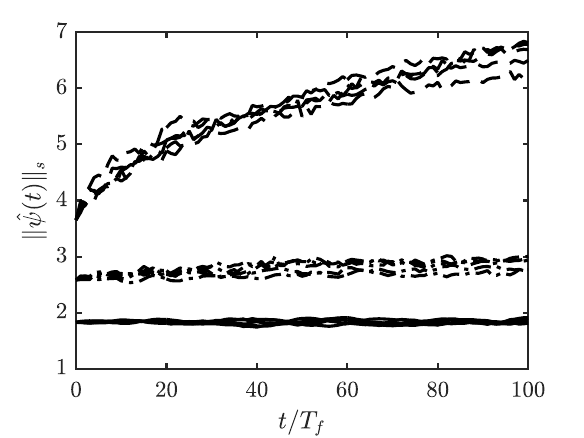}
    \caption{The long-time growth of $\|\hat{\psi}(t)\|_s$ on the irrational torus for $R=3.6526,2.5828,1.8263$ from top to bottom.For each $R$, the growth with $5$ realizations of $\phi_k$ are provided.}
    \label{fig:rvt_irr}
\end{figure}

\subsection{Kinematic Study on $(\Lambda,\tau)$-quasi Resonant Sets}
All results in Section 6.2 indicate that the energy cascade on irrational tori is much less efficient than that on rational tori. This fact can be further understood from a kinematic analysis on the spreading of $(\Lambda,\tau)$-quasi resonant sets on the two tori. By ``kinematic'' we refer to the study which only considers the modes that can be excited by \eqref{nonres} instead of the exact dynamics of \eqref{irrNLS}. Specifically, we choose some initial active modes as the level-1 set, and keep searching for modes that can be excited by $(\Lambda,\tau)$-quasi resonance in the next levels. The detailed numerical algorithm is summarized as follows:
\begin{enumerate}
    \item Fix initial modes in level-1 set $L_1$.
    \item For each 3 modes in $L_1$, find the 4th mode which forms a $(\Lambda,\tau)$-quasi resonant quartet with the 3 modes and is outside $L_1$. Put all such modes in $L_2$.
    \item For each 3 modes in $L_1\cup L_2$, find the 4th mode which forms a $(\Lambda,\tau)$-quasi resonant quartet with the 3 modes and is outside $L_1\cup L_2$. Put all such modes in $L_3$.
    \item Repeat procedure 3 to compute $L_j$ for $j>3$.
\end{enumerate}

We note that this level definition and algorithm differ critically from the one considered in \cite{Colliander-num} which is defined specifically (and only) for resonant sets on the rational torus. Setting $L_1=[-2,2]\times[-2,2]$, Figure \ref{fig:r_ir} shows the results of the quasi-resonant sets for $\tau=0.1$ and three different values of $\Lambda$ on both rational and irrational tori. We compute the sets up to level $N=min(6,\mathcal{N})$, where $\mathcal{N}$ corresponds to the level for which no modes can be excited in the next level. This truncation of $N$ has to be taken because otherwise the levels on the rational torus extends to infinity due to the diamond-type resonance included in the $(\Lambda,\tau)$-quasi resonance.

\begin{figure}
    \centering
    \includegraphics[scale=.7]{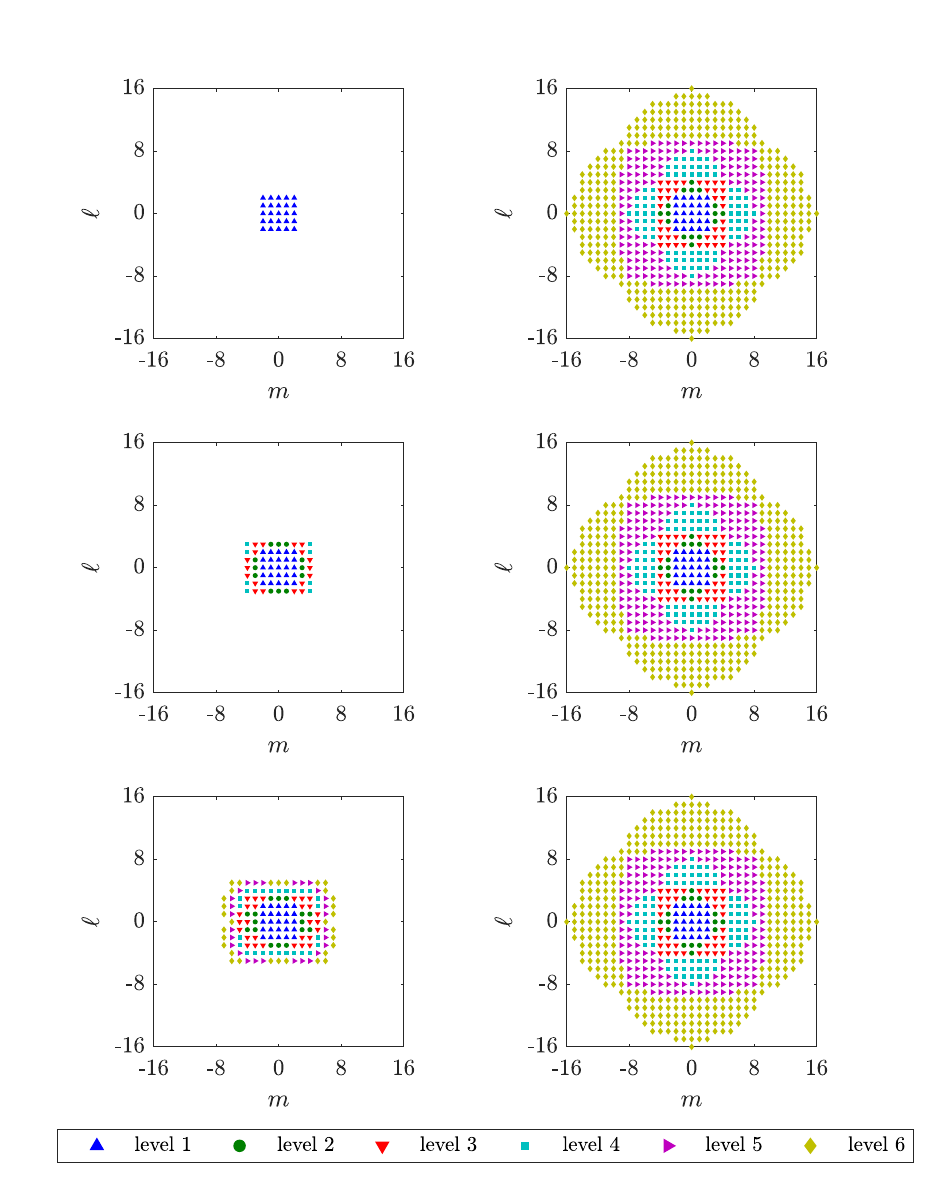}
    \caption{levels of $(\Lambda,\tau)$-quasi resonant sets for $L_1=[-2,2]\times[-2,2]$, $\tau=0.1$, computed up to level $N=min(6,\mathcal{N})$, where $\mathcal{N}$ corresponds to the level for which no modes can be excited in the next level. Left column: $\Lambda=10,20,30$ on irrational torus from top to bottom; Right column: $\Lambda=10,20,30$ on rational torus from top to bottom.}
    \label{fig:r_ir}
\end{figure}

From Figure \ref{fig:r_ir} we see that on the  irrational torus the number of excited modes increase with the value of $\Lambda$. In particular, for $\Lambda=10$, the cascade is disabled without any excited modes; for $\Lambda=20$, the modes extend to level 4 in a rectangular region (the rectangular shape is due to the axis-parallel resonance); for $\Lambda=30$, all six levels are filled with excited modes. In general, the excited modes on the irrational torus is much less in number and more anisotropic compared to those on the rational torus, consistent with our numerical results in \S6.2. Moreover, the $(\Lambda,\tau)$-quasi resonant sets on the rational torus does not depend on $\Lambda$ in the tested range (for larger $\Lambda$ outside the range, it indeed changes the sets). This is because the modal cascade on the rational torus is dominantly controlled by the (diamond-shape) exact resonance included in the $(\Lambda,\tau)$-quasi resonance with any value of $\Lambda$.

\subsection{Discussions on Numerical Study}
We first conclude the numerical study with a summary of results. We study  the barrier of the NLS initial value problem \eqref{irrNLS}  to reveal significant differences in $M$ between tori with $\omega^2=1$ and $\omega^2=\sqrt{2}$, with $M$ taking smaller values on the irrational torus for all tested values of $R$ and $\varepsilon$. We then identify limited spectral growth and anisotropy in the 2D energy spectrum of the irrational torus. These findings are reflected also in the growth of $\|\hat{\psi}(t)\|_s$ on both tori, for which the rational torus growth rate exceeds that of the irrational torus. These findings are consistent with a kinematic analysis on the $(\Lambda,\tau)$-quasi resonant sets on rational and irrational tori. All together, this numerical study demonstrates dramatic differences in the energy cascade capacity of the 2D NLS on rational and irrational tori.

We remark that the results obtained here should place some caveat to the numerics community who study wave turbulence using simulations with periodic boundary conditions. This type of simulations (sometimes referred to as idealized simulations) are widely used in different fields today, such as capillary waves, surface/internal gravity waves and plasma waves. With the rapid growth of computational power, a great effort in the numerics community is to push the resolution of such simulations, utilizing huge computational resources, in order to study the small-scale dynamics. The underlying ``hope'' is that this periodic-domain simulation represents the dynamics on a patch of an infinite domain featuring homogeneous turbulence (see figure \ref{fig:period}). However, due to the results in this work, this ``hope'' may not become true under certain situations. A simple argument can be given as follows: If the periodic-domain simulation provides the same dynamics as the infinite domain, then the results should be consistent for different domain aspect ratios. This is clearly in contradiction with our results in this paper. In fact, how the energy propagates to small scales is critically related to the interaction of dispersion relation and domain aspect ratio, which should be taken into consideration in future applications of idealized simulation for studying wave turbulence.

\begin{figure}[h]
  \centering
  \includegraphics[scale=.3]{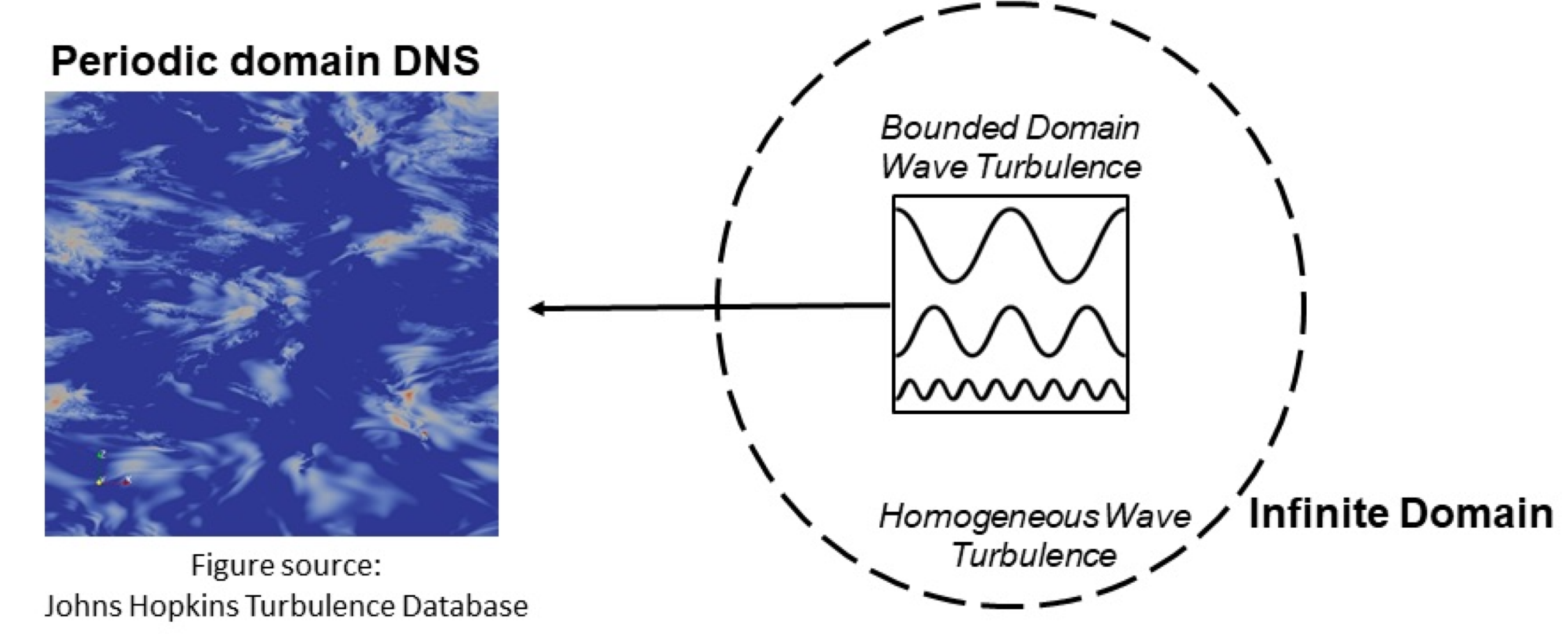}
  \vspace{-6mm}
  \caption{Periodic-domain simulation with the purpose to simulate one path of homogeneous turbulence in an infinite domain.}
  \label{fig:period}
\end{figure}

\pagebreak

\appendix

\section{On the Finite-precision Representation of Irrational $\omega^2$}\label{App.Finite}

We denote a floating-point representation of $\omega^2$ by $\omega_*^2$, with
		\begin{align*}
            \omega_*^2 = \frac{a}{b}
        \end{align*}
        where $a,b\in\mathbb{Z}$ are relatively prime (all floating-point approximations can be written in this way). We consider a quartet $(k_1, k_2, k_3, k_4)$ with
		\begin{align*}
		    \lambda_{k_1}+\lambda_{k_2}-\lambda_{k_3}-\lambda_{k_4}&=0 \mbox{ and }\\
		    k_1+k_2-k_3-k_4&=0.
		\end{align*}
		For $k_i=(m_i, \ell_i)$, recall that the first of these conditions can be written as
        \begin{align*}
            m_1^2+m_2^2-m_3^2-m_4^2+\omega_*^2(\ell_1^2+\ell_2^2-\ell_3^2-\ell_4^2)=0,
        \end{align*}
        while the latter can be expressed as
        \begin{align*}
            m_4&=m_1+m_2-m_3  \mbox{ and }\\
            \ell_4&=\ell_1+\ell_2-\ell_3.
        \end{align*}
        By substituting these conditions on $m_4$ and $\ell_4$ into our first condition built on the dispersion relation, a single quadratic equation can be derived to describe the relationship between the components of our quartet. In its factored form, the equation is
        \begin{align*}
            (m_1-m_3)(m_2-m_3)&=-\omega_*^2(\ell_1-\ell_3)(\ell_2-\ell_3)\\
            \Leftrightarrow \Delta m_{13}\Delta m_{23}&=-\omega_*^2\Delta \ell_{13}\Delta \ell_{23},
        \end{align*}
        where a $\Delta$-notation is adopted for simplicity. As required by Lemma \ref{axisparallel}, equality may only hold for $\Delta m_{13}\Delta m_{23}=-\omega^2\Delta \ell_{13}\Delta \ell_{23}=0$ due to the irrationality of $\omega^2$. We require of our floating-point approximation this same property, namely that
        \begin{align*}
            b\Delta m_{13}\Delta m_{23}=-a\Delta \ell_{13}\Delta \ell_{23}
        \end{align*}
        holds only if the $\text{LHS}=\text{RHS}=0$. We will now identify a necessary condition on any quartet that violates this property. Consider the factorizations of $\Delta m_{13}\Delta m_{23}$ and $\Delta \ell_{13}\Delta \ell_{23}$ and assume
        \begin{align*}
            b\Delta m_{13}\Delta m_{23}=-a\Delta \ell_{13}\Delta \ell_{23} \neq 0.
        \end{align*}
        Because $a$ and $b$ are relatively prime, $\Delta m_{13}\Delta m_{23}$ must have $a$ as a factor and $\Delta \ell_{13}\Delta \ell_{23}$ must have $b$ as a factor. Every quartet that would violate lemma \ref{axisparallel} due to the floating-point approximation of $\omega^2$ has this property. It is now possible to write a condition on $a$ and $b$ such that lemma \ref{axisparallel} is not violated for a given computational domain. Suppose $K=\max(\Delta m_{max},\Delta \ell_{max})$ is the maximum difference in mode number possible on our bounded computational domain. If $K^2<\max(a,b)$ then the only quartets on our domain that satisfy the resonance condition have $\text{LHS}=\text{RHS}=0$, and thus lemma \ref{axisparallel} is preserved.
    
        To obtain $a$ and $b$ in practice, one begins by retrieving the floating-point number $\omega_*^2$. In this case, $\omega_*^2=1.414213562373095$. Then we write $\omega_*^2$ as a fraction and reduce the fraction to relative primes as $a/b$. For example, we can write $\omega_*^2=1414213562373095/1000000000000000$, which reduces to $a=282842712474619$, and $b=200000000000000$. For our simulation of $257\times257$ alias-free modes, $K^2=262144$, so indeed we have $K^2<\max(a,b)$.

        The procedure that determines $a$ and $b$ depends critically on the overall precision of $\omega^2_*$, as the use of single or quad-precision will completely change the final reduced fraction that one obtains due to a change in the number of digits in $\omega^2_*$. Higher precision generally produces a larger $a$ and $b$, and thus would accommodate larger $K$. Conversely, as the size of the computational domain increases, one requires a higher precision approximation of $\omega^2$. In the limit of infinite $K$, no finite-precision (rational) approximation of $\omega^2$ would suffice. We also remark that it is possible to find genuinely rational $\omega^2$ such that the condition $K^2<\max(a,b)$ is met. For our $K^2$, $\omega^2=370723/262144=1.414196014404296875$ is one such example that happens to be close to $\sqrt{2}$, which exhibits irrational-torus behavior in the simulation that we have verified.


\end{document}